\newcommand{\half}{\frac{1}{2}}
\newcommand{\abs}[1]{\vert #1 \vert}
\newcommand{\norm}[1]{\left\Vert #1 \right\Vert}
\newcommand{\R}{\mathbb{R}}
\begin{document} 
\newtheorem{prop}{Proposition}[section]
\newtheorem{Def}{Definition}[section]
\newtheorem{theorem}{Theorem}[section]
\newtheorem{lemma}{Lemma}[section]
 \newtheorem{Cor}{Corollary}[section]

\title[LWP for Yang-Mills-Dirac]{\bf Local well-posedness of the coupled Yang-Mills and Dirac system in temporal gauge}
\author[Hartmut Pecher]{
{\bf Hartmut Pecher}\\
Fakult\"at f\"ur  Mathematik und Naturwissenschaften\\
Bergische Universit\"at Wuppertal\\
Gau{\ss}str.  20\\
42119 Wuppertal\\
Germany\\
e-mail {\tt pecher@uni-wuppertal.de}}
\date{}

\begin{abstract}
We consider the classical Yang-Mills system coupled with a Dirac equation in 3+1 dimensions in temporal gauge. Using that most of the nonlinear terms fulfill a null condition we prove local well-posedness for small data with minimal regularity assumptions. This problem for smooth data was solved forty years ago by Y. Choquet-Bruhat and D. Christodoulou. The corresponding problem in Lorenz gauge was considered recently by the author in \cite{P1}.
\end{abstract}
\maketitle
\renewcommand{\thefootnote}{\fnsymbol{footnote}}
\footnotetext{\hspace{-1.5em}{\it 2020 Mathematics Subject Classification:} 
35Q40, 35L70 \\
{\it Key words and phrases:} Yang-Mills,  Dirac equation,
local well-posedness, temporal gauge}
\normalsize 
\setcounter{section}{0}

\section{Introduction and the main theorem}

\noindent 
Let $\mathcal{G}$ be the Lie group $SU(n,\mathbb{C})$ (the group of unitary matrices of determinant 1) and $g$ its Lie algebra $su(n,\mathbb{C})$ (the algebra of trace-free skew hermitian matrices) with Lie bracket $[X,Y] = XY-YX$ (the matrix commutator). 
For given  $A_{\alpha}: \mathbb{R}^{1+3} \rightarrow g $ we define the curvature $F=F[A]$ by
\begin{equation}
\label{curv}
 F_{\mu \nu} = \partial_{\mu} A_{\nu} - \partial_{\nu} A_{\mu} + [A_{\mu},A_{\nu}] \, , 
\end{equation}
where $\mu,\nu \in \{0,1,2,3\}$ and $D_{\mu} = \partial_{\mu} + [A_{\mu}, \cdot \,]$ .

Then the Yang-Mills system is given by
\begin{equation}
\label{0}
D^{\mu} F_{\mu \nu}  = 0
\end{equation}
in Minkowski space $\mathbb{R}^{1+3} = \mathbb{R}_t \times \mathbb{R}^3_x$ , with metric $diag(-1,1,1,1)$. Greek indices run over $\{0,1,2,3\}$, Latin indices over $\{1,2,3\}$, and the usual summation convention is used.  
We use the notation $\partial_{\mu} = \frac{\partial}{\partial x_{\mu}}$, where we write $(x^0,x^1,x^2,x^3)=(t,x^1,x^2,x^3)$ and also $\partial_0 = \partial_t$.

This system is coupled with a Dirac spinor field $\psi: \R^{1+3} \to \mathbb{C}^4$ . Let $T_a$ be the set of generators
 of $SU(n,\mathbb{C})$ and $A_{\mu} = A^a_{\mu} T_a$ , $F_{\mu \nu} = F^a_{\mu \nu} T_a$ , $[T^\lambda,T^b]_a =: f^{ab \lambda}$ .

For the following considerations and also for the physical background we refer to the monograph by Matthew D. Schwartz \cite{Sz} . We also refer to the pioneering work for the Yang-Mills, Higgs and spinor field equations by Y. Choquet-Bruhat and D.  Christodoulou \cite{CC} , and G. Schwarz and J. Sniatycki \cite{SS}.

The kinetic Lagrangian with $N$ Dirac fermions and the Yang-Mills Lagrangian are given by
$\mathcal{L} = \sum_{j=1}^N \bar{\psi}_j  (i \gamma^{\mu} \partial_{\mu}-m)\psi_j $ and $\mathcal{L}_{YM} = - \frac{1}{4} (F^a_{\mu \nu})^2$ , respectively. Here $\bar{\psi} = \psi^{\dagger} \gamma^0$ , where $\psi^{\dagger}$ is the complex conjugate transpose of $\psi$ .  

Here $\gamma^{\mu}$ are the (4x4) Dirac matrices given by
$ \, \, \gamma^0 = \left( \begin{array}{cc}
I & 0  \\ 
0 & -I  \end{array} \right)\, \,$ 
 , $\, \,  \gamma^j = \left( \begin{array}{cc}
0 & \sigma^j  \\
-\sigma^j & 0  \end{array} \right) \, \, $ , where  $\, \, \sigma^1 = \left( \begin{array}{cc}
0 & 1  \\
1 & 0  \end{array} \right)$ ,
$ \sigma^2 = \left( \begin{array}{cc}
0 & -i  \\
i & 0  \end{array} \right)$ ,
$ \sigma^3 = \left( \begin{array}{cc}
1 & 0  \\
0 & -1  \end{array} \right)$ .
Then we consider the following Lagrangian for the (minimally)  coupled system
\begin{align*}
 \mathcal{L}&=- \frac{1}{4} (F^a_{\mu \nu})^2 + \sum_{i,j=1}^N \bar{\psi}_i (\delta_{ij} i \gamma^{\mu} \partial_{\mu}+\gamma^{\mu}A^a_{\mu} T^a_{ij}     -m \delta_{ij})\psi_j \\
& =-\frac{1}{4}(\partial_{\mu} A^a_{\nu}-\partial_{\nu} A^a_{\mu} + f^{abc} A^b_{\mu} A^c_{\nu})^2 +\sum_{i,j=1}^N \bar{\psi}_i (\delta_{ij} i \gamma^{\mu} \partial_{\mu}+\gamma^{\mu}A^a_{\mu} T^a_{ij}     -m \delta_{ij})\psi_j \, . 
\end{align*}
Here $T^a_{ij} \in \mathbb{C}$ are the entries of the matrix $T^a$ . 

The corresponding equations of motion are given by the following coupled Yang-Mills-Dirac system (YMD)
\begin{align*} 
 \partial^{\mu} F^a_{\mu \nu} + f^{abc} A^b_{\mu} F^c_{\mu \nu} &= - \langle \psi_i,\gamma_0 \gamma_{\nu} T^a_{ij} \psi_j \rangle \\
(i \gamma^{\mu} \partial_{\mu} -m) \psi_i & = - A^a_{\mu} \gamma^{\mu} T^a_{ij} \psi_j \, .
\end{align*}
Using $D^{\mu} F_{\mu \nu} = \partial^{\mu} F_{\mu \nu} + [A^{\mu},F_{\mu \nu}]$ and 
$$ [A^{\mu},F_{\mu \nu}]_a = [A^{\lambda}_{\mu} T_{\lambda},F^b_{\mu \nu} T_b]_a = A^{\lambda}_{\mu} F^b_{\mu \nu} [T_{\lambda},T_b]_a = A^b_{\mu} F^b_{\mu \nu} f_{ab\lambda} $$
we obtain the following system which we intend to treat:
\begin{align}
\label{0.1}
D^{\mu} F_{\mu \nu} & = - \langle \psi^i,\alpha_{\nu} T^a_{ij} \psi^j \rangle T_a \, , \\
\label{0.2}
i \alpha^{\mu} \partial_{\mu} \psi_i & = -A^a_{\mu} \alpha^{\mu} T^a_{ij} \psi_j \, ,
\end{align}
if we choose $m=0$ just for simplicity and define the matrices $\alpha^{\mu} = \gamma^0 \gamma^{\mu}$ , so that $\, \,\alpha^0 = I_{4x4}$ and $ \alpha^j = \left( \begin{array}{cc}
0 & \sigma^j  \\
\sigma^j & 0  \end{array} \right)$.
$\alpha^{\mu}$ are hermitian matrices with $(\alpha^{\mu})^2 = I_{4x4}$ , $\alpha^j \alpha^k + \alpha^k \alpha^j = 0$ for $j \neq k$ .

Setting $\nu =0$ in (\ref{0.1}) we obtain the Gauss-law constraint
\begin{equation}
	\nonumber
	\partial^j F_{j 0} = -[A^j,F_{j0}] + \langle \psi^i,T^a_{ij} \psi^j \rangle T_a \,. 
\end{equation}

The system is gauge invariant. Given a sufficiently smooth function $U: {\mathbb R}^{1+3} \rightarrow \mathcal{G}$ we define the gauge transformation $T$ by $T A_0 = A_0'$ , 
$T(A_1,A_2,A_3) = (A_1',A_2',A_3')$ , $T\psi = \psi'$ , where
\begin{align*}
	A_{\alpha} & \longmapsto A_{\alpha}' = U A_{\alpha} U^{-1} - (\partial_{\alpha} U) U^{-1} \\
	\psi & \longmapsto \psi' = U \psi \, . 
\end{align*}

Following \cite{AFS1} and \cite{HO} in order to  rewrite the Dirac equation we define the projections
$$\Pi(\xi) := \half(I_{4x4} + \frac{\xi_j \alpha^j}{|\xi|})$$  and $\Pi_{\pm}(\xi):= \Pi(\pm \xi)$ , so that $\Pi_{\pm}(\xi)^2 = \Pi_{\pm}(\xi)$ , $\Pi_+(\xi) \Pi_-(\xi) =0 $ , $\Pi_+ (\xi) + \Pi_-(\xi) = I_{4x4}$ , $\Pi_{\pm}(\xi) = \Pi_{\mp}(-\xi) $ .
We obtain 
\begin{equation}
\label{2.6'}
\alpha^j \Pi(\xi) = \Pi(\pm \xi) \alpha^j + \frac{\xi_j}{|\xi|} I_{4x4} \, .
\end{equation}
Using the notation $\Pi_{\pm} = \Pi_{\pm}(\frac{\nabla}{i})$ we obtain
\begin{equation}
\label{2.8}
 -i\alpha^j \partial_j = |\nabla|\Pi_+ - |\nabla|\Pi_- \, , 
\end{equation}
where $|\nabla|$ has symbol $|\xi| $ . Moreover defining the modified Riesz transform by $R^j_{\pm} = \mp(\frac{\partial_j}{i|\nabla|}) $ with symbol $ \mp \frac{\xi_j}{|\xi|}$ and $R^0_{\pm} = -1$ the identity (\ref{2.6'}) implies
\begin{equation}
\label{2.7}
\alpha^j \Pi_{\pm} = (\alpha^j \Pi_{\pm})\Pi_{\pm} = \Pi_{\mp} \alpha^j \Pi_{\pm}- R^j_{\pm} \Pi_{\pm} \, , \, \alpha^0 \Pi_{\pm}= \Pi_{\pm} = \Pi_{\mp} \alpha^0 \Pi{\pm} - R^0_{\pm} \Pi_{\pm} \, .
\end{equation}
If we define $\psi_{i,\pm} = \Pi_{\pm} \psi_i$ we obtain by applying the projection $\Pi_{\pm}$ and (\ref{2.8}) the Dirac type equation in the form
\begin{equation}
\label{0.3}
(i \partial_t \pm |\nabla|)\psi_{i,\pm} = \Pi_{\pm}(A^a_{\mu} \alpha^{\mu} T^a_{ij} \psi^j) = : H_{i,\pm}(A,\psi) \, .
\end{equation}

The Yang-Mills equation (\ref{0.1}) may be written as
$$\square  A_{\nu} = \partial_{\nu} \partial^{\mu} A_{\mu}-[\partial^{\mu} A_{\mu},A_{\nu}] - [A_{\mu},\partial^{\mu} A_{\nu}] - [A^{\mu},F_{\mu \nu}] - \langle \psi_i,\alpha_{\nu} T^a_{ij} \psi^j \rangle T_a \, . $$

From now on we impose the temporal gauge
$$ A_0 = 0 \, . $$
This implies the wave equation
\begin{align}
\nonumber
\square  A_j &= \partial_j div\, A - [div\, A,A_j] - [A_i,\partial^i A_j] - [A^i,F_{ij}]- \langle \psi^i,\alpha_k T^a_{ik} \psi^k \rangle T_a \\
&= \partial_j div\, A - [div\, A,A_j] - 2[A_i,\partial^i A_j] + [A^i,\partial_j A_i] - [A^i,[A_i,A_j]]\\
& - \langle \psi^i,\alpha_k T^a_{ik} \psi^k \rangle T_a 
\label{0.4} 
\end{align}
and
$$
0 = \partial_t div \, A - [A_i,\partial_t A^i] - \langle \psi^i,\alpha_k T^a_{ik} \psi^k \rangle T_a \, . $$

Now we use the Hodge decomposition of $A=(A_1,A_2,A_3)$ into its divergence-free and curl-free parts:
$$A= A^{df} + A	^{cf} \, , $$
where 
$$PA := A^{df} = |\nabla|^{-2} \nabla \times(\nabla \times A) \,
\Leftrightarrow \,  A^{df}_j = R^k(R_j A_k-R_k A^j) $$
and 
$$A^{cf} = -|\nabla|^{-2} \nabla(div \, A) \quad \Leftrightarrow \quad A	^{cf}_j =-R_j R_k A^k \, . $$ 
Here $R_j = \frac{\partial_j}{|\nabla|}$ is the Riesz transform. \\
Then we obtain the following system:
\begin{align} 
	\label{6}
	\partial_t A^{cf} & = |\nabla|^{-2} \nabla [A_i,\partial_t A^i] + |\nabla|^{-2} \nabla \langle \psi^i,\alpha_k T^a_{ik} \psi^k \rangle T_a \, , \\
	\nonumber
	\square  A_j^{df} 
	&= -P [div\, A^{cf},A_j] - 2P[A_i,\partial^i A_j] + P[A^i,\partial_j A_i] - P[A^i,[A_i,A_j]] \\
	\label{7}
	& \quad - P \langle \psi^i,\alpha_k T^a_{ik} \psi^k \rangle T_a \\
	\label{8}
	(i \partial_t \pm |\nabla|)\psi_{i,\pm} & = \Pi_{\pm} (A^a_k \alpha^k T^a_{ij} \psi^j) \, .
\end{align}

We want to solve the system (\ref{6}),(\ref{7}),(\ref{8}) simultaneously for $A^{cf}$ , $A^{df}$ and $\psi_{\pm}$ .
So to pose the Cauchy problem for this system, we consider initial data for $(A^{df},A^{cf},\psi)$ at $t=0$:
\begin{equation}\label{Data}
\begin{split}
&A^{df}(0) = a_0^{df}, \, (\partial_t A^{df})(0) =  a_1^{df},
        \, A^{cf}(0) = a_0^{cf}\\
         & \, \psi_{i,\pm}(0) = \psi_{i,\pm}^0 =  \Pi_{\pm} \psi_0^i .
\end{split}
 \end{equation}

Let us make some historical remarks. As is well-known we may impose a gauge condition. Convenient gauges are the Coulomb gauge $\partial^j A_j=0$ , the Lorenz gauge $\partial^{\alpha}A_{\alpha} =0$ and the temporal gauge $A_0 =0$.  It is well-known that for the low regularity well-posedness problem for the Yang-Mills equation a null structure for some of the nonlinear terms plays a crucial role. This was first detected by Klainerman and Machedon \cite{KM}, who proved global well-posedness in the case of three space dimensions in temporal and in Coulomb gauge in energy space. The corresponding result in Lorenz gauge, where the Yang-Mills equation can be formulated as a system of nonlinear wave equations, was shown by Selberg and Tesfahun \cite{ST}, who discovered that also in this case some of the nonlinearities have a null structure.  Tesfahun \cite{Te} improved the local well-posedness result to data without finite energy, namely for $(A(0),(\partial_t A)(0) \in H^s \times H^{s-1}$ and $(F(0),(\partial_t F)(0) \in H^r \times H^{r-1}$ with $s > \frac{6}{7}$ and $r > -\frac{1}{14}$, by discovering an additional partial null structure. 
Local well-posedness in energy space was also shown by Oh \cite{O} using a new gauge, namely the Yang-Mills heat flow. He was also able to show that this solution can be globally extended \cite{O1}. Tao \cite{T1} showed local well-posedness for small data in $H^s \times H^{s-1}$ for $ s > \frac{3}{4}$ in temporal gauge.

The coupled Yang-Mills and Dirac system in Lorenz gauge was considered from the physical point of view by M. D. Schwartz \cite{Sz}. Local existence for smooth initial data, uniqueness in suitable gauges under appropriate conditions on the data and global existence for small and smooth data , i.e. $(A(0),(\partial_t A)(0),F(0),(\partial_t F)(0),\\ \psi(0)) \in H^s \times H^{s-1} \times H^{s-1} \times H^{s-2} \times H^s$ with $s \ge 2$  was proven by  Y. Choquet-Bruhat and D. Christodoulou \cite{CC}, and G. Schwarz and J. Sniatycki \cite{SS}. 

In \cite{P1} the author considered this problem in Lorenz gauge and obtained local well-posedness for $s > \frac{3}{4}$ , $r >-\frac{1}{8}$ and $l > \frac{3}{8}$ , where existence holds in $ A \in C^0([0,T],H^s) \cap C^1([0,T],H^{s-1}) \, , \, F \in C^0([0,T],H^r) \cap C^1([0,T],H^{r-1}) $, $\psi \in C^0([0,T],H^l)$ and (existence and) uniqueness in a certain subspace of Bourgain-Klainerman-Machedon type $X^{s,b}$ .  We relied on Selberg-Tesfahun \cite{ST} and Tesfahun's  result \cite{Te}, who detected   the null structure in most - unfortunately not all  - critical nonlinear terms.  We also made use of the methods used by Huh and Oh \cite{HO} for the Chern-Simons-Dirac equation.

We now study the Yang-Mills-Dirac system in temporal gauge for low regularity data,  which fulfill a smallness assumption, which reads as follows
$$ \|A(0)\|_{H^s} + \|(\partial_t A)(0)\|_{H^{s-1}} + \|\psi(0)\|_{H^l} < \epsilon $$
with a sufficiently small $\epsilon > 0$ , under the assumption $s> \frac{3}{4}$  and $l > \frac{1}{4}$ . We obtain a solution which satisfies $A \in C^0([0,1],H^s) \cap C^1([0,1],H^{s-1})$, $\psi \in C^0([0,1],H^l)$.
Uniqueness holds in spaces of Bourgain-Klainerman-Machedon type.
Thus the parameter $l$ can be weakened compared to the result in Lorenz gauge at the expense of a smallness assumption on the data.
The basis for our results is Tao's paper \cite{T1}, who considered the corresponding result for the Yang-Mills equation. We carry over his result to the more general Yang-Mills-Dirac equation. The result relies on the null structure of all the critical bilinear terms. We review this null structure which was partly detected already by Klainerman-Machedon \cite{KM1}  in the situation of the Lorenz gauge. The necessary estimates for those nonlinear terms, which contain no terms depending on $\psi$ ,  in spaces of $X^{s,b}$-type then reduce essentially to Tao's result \cite{T1}. One of these estimates is responsible for the small data assumption. Because these local well-posedness results can initially only be shown under the condition that the curl-free part $A^{cf}$ of $A$ (as defined below) vanishes for $t=0$ we have to show that this assumption can be removed by a suitable gauge transformation (Lemma \ref{Lemma}) which preserves the regularity of the solution. This uses an idea of Keel and Tao \cite{T1}. A proof for the Yang-Mills and Yang-Mills-Higgs case was given by \cite{P1}.\\[0.5em]

Our main theorem reads as follows:
\begin{theorem}
	\label{Theorem1.1}
	Let  $s > \frac{3}{4}$ , $l > \frac{1}{4}$ , $s \ge l \ge s-1$ , $2s-l > 1$ and $l-s \ge -\half$.  Let $a_0 \in H^s({\mathbb R}^3)$ , $a_1 \in H^{s-1}({\mathbb R}^3)$ , $\psi_0 \in H^l({\mathbb R}^3)$ be given  satisfying the Gauss law constraint $\partial^j a_j^1 = -\partial^j a_j^1+ \langle \psi^i_0,T^a_{ij} \psi^j_0 \rangle T_a$. Assume
	$$ \|a_0\|_{H^s} + \|a_1\|_{H^{s-1}} + \|\psi_0\|_{H^l}  \le \epsilon \, , $$
	where $\epsilon > 0$ is sufficiently small. Then the Yang-Mills-Dirac equations (\ref{0.1}) , (\ref{0.2}) in temporal gauge $A_0=0$ with initial conditions
	$$ A(0)=a_0 \, , \, (\partial_t A)(0) = a_1 \, , \, \psi(0)=\psi_0 \,,$$
	where $A=(A_1,A_2,A_3)$,
	has a unique local solution  $A= A_+^{df} + A_-^{df} +A^{cf}$ and $\phi = \phi_+ + \phi_-$ , where
	$$ A^{df}_{\pm} \in X^{s,\frac{3}{4}+}_{\pm}[0,1]  ,  A^{cf} \in X^{s+\frac{1}{4},\frac{1}{2}+}_{\tau=0}[0,1]  ,  \partial_t A^{cf} \in C^0([0,1],H^{s-1})  , 
	\psi_{\pm} \in  X^{l,\half+}_{\pm}[0,1] \, , $$
	where these spaces are defined below. This solution fulfills
	$$ A \in C^0([0,1],H^s({\mathbb R}^3)) \cap C^1([0,1],H^{s-1}({\mathbb R}^3)) \, , \, \psi \in C^0([0,1],H^l({\mathbb R}^3)) \, .$$
\end{theorem}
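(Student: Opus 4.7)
The plan is to solve the system (\ref{6})--(\ref{8}) by Picard iteration in a product of Bourgain-Klainerman-Machedon $X^{s,b}$-type spaces, essentially transplanting Tao's scheme \cite{T1} for pure Yang-Mills to the coupled system. I first invoke the gauge transformation (Lemma \ref{Lemma}) mentioned in the introduction to reduce to data with $a_0^{cf}=0$, so that (\ref{6}) may be integrated in time to recover $A^{cf}(t)$ from $\partial_t A^{cf}$. I then split $A^{df}=A^{df}_+ + A^{df}_-$ into its two half-wave components via $A^{df}_\pm = \tfrac{1}{2}(A^{df}\mp(i|\nabla|)^{-1}\partial_tA^{df})$, so that (\ref{7}) becomes a pair of transport-type equations for $A^{df}_\pm$. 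The unknown $(A^{df}_+,A^{df}_-,A^{cf},\psi_{i,\pm})$ is then sought in $X^{s,3/4+}_+[0,1]\times X^{s,3/4+}_-[0,1]\times X^{s+1/4,1/2+}_{\tau=0}[0,1]\times X^{l,1/2+}_\pm[0,1]$, with the standard linear energy estimates for these spaces controlling each free evolution by the data norms.

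The heart of the argument is multilinear estimates for each nonlinearity. The purely gauge-field nonlinearities in (\ref{7})---the quadratic terms $P[A_i,\partial^iA_j]$, $P[A^i,\partial_jA_i]$, $P[\mathrm{div}\,A^{cf},A_j]$ and the cubic $P[A^i,[A_i,A_j]]$---together with $|\nabla|^{-2}\nabla[A_i,\partial_tA^i]$ in (\ref{6}), reduce directly to Tao's estimates in \cite{T1} once one exposes the $\mathcal{Q}_{ij}$-type null form in the combination of the first two terms modulo curl-free pieces; the weighted space $X^{s+1/4,1/2+}_{\tau=0}$ for $A^{cf}$ is tailored precisely to absorb the $\partial_tA$ factor. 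For the Dirac coupling $\Pi_\pm(A^a_k\alpha^k T^a_{ij}\psi^j)$ in (\ref{8}) I use (\ref{2.7}) to split it into a genuinely null piece $\Pi_\mp\alpha^k\Pi_\pm$ (a spinorial null form of Klainerman-Machedon type) plus the zero-order Riesz contribution $R^k_\pm\Pi_\pm$; in the latter only a curl-free piece of $A$ survives after Hodge decomposition, and the extra $1/4$ derivative carried by $X^{s+1/4,1/2+}_{\tau=0}$ makes the bilinear estimate close in $X^{l,1/2+}_\pm$. Finally the bilinear Dirac source $\langle\psi^i,\alpha_kT^a_{ik}\psi^k\rangle$ in (\ref{6})--(\ref{7}) is handled by the spinorial null form coming from the $\Pi_\pm$-decomposition of the two spinor factors, which together with the derivative gain from $|\nabla|^{-2}\nabla$ in (\ref{6}) yields the required bounds in $H^{s-1}$ and in $X^{s,b-1}_\pm$.

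Once these estimates are in place, the standard $T^\theta$-gain ($\theta>0$) inherent to $X^{s,b}$ spaces with $b>1/2$ reduces all but one of the contributions to arbitrarily small constants on $[0,1]$; the one remaining critical non-null term inherited from \cite{T1} carries instead a universal constant, and it is this term that forces the smallness assumption on the data and yields contraction on a ball of radius $\sim\epsilon$. The numerology $s>3/4$, $l>1/4$, $2s-l>1$, $l-s\ge -1/2$, $s\ge l\ge s-1$ arises naturally by matching derivative exponents across the three coupled estimate families $A\!\cdot\!A\to A$, $A\!\cdot\!\psi\to\psi$, $\psi\!\cdot\!\psi\to A$, which between them must close on the common product space above.

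The main obstacle I expect is the $A^{df}\!\cdot\!\psi\to\psi$ bilinear estimate at the very low regularity $l>1/4$, where only the genuine null-form cancellation between $A^{df}_{\pm_1}$ and $\psi_{\pm_2}$ can compensate for the derivative loss; keeping track of the resonance structure $(\pm_1,\pm_2,\pm_3)\in\{+,-\}^3$ and of the separate treatment of the $\Pi_\mp\alpha^k\Pi_\pm$ versus $R^k_\pm$ pieces is delicate. Once this estimate and its counterpart for the $\psi\!\cdot\!\psi$ source are established, the contraction map argument closes, the persistence-of-regularity statements $A\in C^0([0,1],H^s)\cap C^1([0,1],H^{s-1})$ and $\psi\in C^0([0,1],H^l)$ follow from $X^{s,b}\hookrightarrow C^0_tH^s_x$ for $b>1/2$, and the initial gauge transformation is seen to preserve all relevant function-space bounds, completing the proof.
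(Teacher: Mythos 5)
Your proposal follows essentially the same route as the paper: gauge-transform the data to kill $A^{cf}(0)$ via Lemma \ref{Lemma}, split $A^{df}$ into half waves, run a contraction argument in $X^{s,\frac34+}_{\pm}\times X^{s+\frac14,\frac12+}_{\tau=0}\times X^{l,\frac12+}_{\pm}$, close the multilinear estimates using the bilinear null forms (both $Q_{jk}$-type and spinorial), and accept the smallness assumption from the one non-null term coming from the $A^{cf}$ evolution. The order of the two reductions is reversed relative to the paper's presentation (it proves the small-data LWP with $A^{cf}(0)=0$ first, then removes the assumption), but this is cosmetic.

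There is, however, a genuine gap in your treatment of the Dirac coupling. You claim that in the Riesz-transform piece $\Pi_{\pm_0}(A_k R^k_{\pm}\psi_{\pm})$ coming from \eqref{2.7}, ``only a curl-free piece of $A$ survives after Hodge decomposition.'' This is false: $A^{df}_k R^k_{\pm}\psi_{\pm}$ does not vanish, and it is exactly this divergence-free contribution that is delicate at $l>\frac14$. The paper handles it (proof of \eqref{8.1}, term II) by writing $A^{df}=\nabla\times w$ with $w=|\nabla|^{-2}\nabla\times A^{df}$, whence $A^{df}_l R^l_{\pm}\psi_{\pm} = (\nabla w_m\times \frac{\nabla}{|\nabla|}\psi_{\pm})^m$, i.e.\ a $Q_{jk}$-type null form between $w$ and $|\nabla|^{-1}\psi_{\pm}$, and then Proposition \ref{Prop.1.2} closes the estimate under $2s-l>1$, $s>\frac34$. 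Without spotting this second null form the $A^{df}\cdot\psi\to\psi$ estimate does not close: the spinorial cancellation $\Pi_{\mp}\alpha^k\Pi_{\pm}$ only removes the other half of the coupling, and the extra $\frac14$-derivative you invoke lives on $A^{cf}$, not $A^{df}$. The curl-free term $A^{cf}\psi$ is indeed controlled by that extra regularity, but via a separate estimate \eqref{43}, not via the Riesz-transform split as you describe.
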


\begin{Def}
	\label{Def.1.2}
	The standard spaces $X^{s,b}_{\pm}$ of Bourgain-Klainerman-Machedon type belonging to the half waves are the completion of the Schwarz space  $\mathcal{S}({\mathbb R}^4)$ with respect to the norm
	$$ \|u\|_{X^{s,b}_{\pm}} = \| \langle \xi \rangle^s \langle  \tau \mp |\xi| \rangle^b \widehat{u}(\tau,\xi) \|_{L^2_{\tau \xi}} \, . $$ 
	Similarly we define the wave-Sobolev spaces $X^{s,b}_{|\tau|=|\xi|}$ with norm
	$$ \|u\|_{X^{s,b}_{|\tau|=|\xi|}} = \| \langle \xi \rangle^s \langle  |\tau| - |\xi| \rangle^b \widehat{u}(\tau,\xi) \|_{L^2_{\tau \xi}}  $$ and also $X^{s,b}_{\tau =0}$ with norm 
	$$\|u\|_{X^{s,b}_{\tau=0}} = \| \langle \xi \rangle^s \langle  \tau  \rangle^b \widehat{u}(\tau,\xi) \|_{L^2_{\tau \xi}} \, .$$
	We also define $X^{s,b}_{\pm}[0,T]$ as the space of the restrictions of functions in $X^{s,b}_{\pm}$ to $[0,T] \times \mathbb{R}^3$ and similarly $X^{s,b}_{|\tau| = |\xi|}[0,T]$ and $X^{s,b}_{\tau =0}[0,T]$. We frequently use the estimates $\|u\|_{X^{s,b}_{\pm}} \le \|u\|_{X^{s,b}_{|\tau|=|\xi|}}$ for $b \le 0$ and the reverse estimate for $b \ge 0$. 
\end{Def}

We recall the fact that
$$
X^{s,b}_\pm [0,T]  \hookrightarrow C^0([-T,T];H^s) \quad \text{for} \ b > \half.
$$
We use the following notation:
let $\langle \nabla \rangle^{\alpha}$ , $D^{\alpha} = |\nabla|^{\alpha}$ and $D_{-}^{\alpha}$ be the multipliers with symbols  $
\langle\xi \rangle^\alpha$ , $
\abs{\xi}^\alpha$ and $ ||\tau|-|\xi||^\alpha$ ,
respectively, where $\langle \cdot \rangle = (1+|\cdot|^2)^{\half}$ . Finally $a{\pm}$ and $a{\pm}{\pm}$ is short for  $a{\pm}\epsilon$ and $a{\pm}2\epsilon$ for a sufficiently small $\epsilon >0$ .

\section{Preliminaries}
  The following product estimates for wave-Sobolev spaces were proven in \cite{AFS}.
\begin{prop}
\label{Prop.1.2'} 
 For $s_0,s_1,s_2,b_0,b_1,b_2 \in {\mathbb R}$ and $u,v \in   {\mathcal S} ({\mathbb R}^{3+1})$ the estimate
$$\|uv\|_{H^{-s_0,-b_0}} \lesssim \|u\|_{H^{s_1,b_1}} \|v\|_{H^{s_2,b_2}} $$ 
holds, provided the following conditions are satisfied:
\begin{align*}
\nonumber
& b_0 + b_1 + b_2 > \frac{1}{2} \, ,
& b_0 + b_1 \ge 0 \, ,\quad \qquad  
& b_0 + b_2 \ge 0 \, ,
& b_1 + b_2 \ge 0
\end{align*}
\begin{align*}
\nonumber
&s_0+s_1+s_2 > 2 -(b_0+b_1+b_2) \\
\nonumber
&s_0+s_1+s_2 > \frac{3}{2} -\min(b_0+b_1,b_0+b_2,b_1+b_2) \\
\nonumber
&s_0+s_1+s_2 > 1 - \min(b_0,b_1,b_2) \\
\nonumber
&s_0+s_1+s_2 > 1 \\
 &(s_0 + b_0) +2s_1 + 2s_2 > \frac{3}{2} \\
\nonumber
&2s_0+(s_1+b_1)+2s_2 > \frac{3}{2} \\
\nonumber
&2s_0+2s_1+(s_2+b_2) > \frac{3}{2}
\end{align*}
\begin{align*}
\nonumber
&s_1 + s_2 \ge \max(0,-b_0) \, ,\quad
\nonumber
s_0 + s_2 \ge \max(0,-b_1) \, ,\quad
\nonumber
s_0 + s_1 \ge \max(0,-b_2)   \, .
\end{align*}
\end{prop}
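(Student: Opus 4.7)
The plan is to prove this bilinear $H^{s,b}$ product estimate following the strategy of D'Ancona--Foschi--Selberg: dualize to a trilinear convolution estimate on the Fourier side, perform a full dyadic decomposition in both frequency and modulation, and sum the dyadic pieces using the sharp bilinear $L^2$ cone estimates.

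First, setting $F_i(\tau,\xi) = \langle\xi\rangle^{s_i}\langle|\tau|-|\xi|\rangle^{b_i}|\widehat{u_i}(\tau,\xi)|$ for $i=1,2$ and introducing a test density $F_0 \in L^2$, one reduces by Plancherel to the trilinear estimate
$$ \int_{*} \frac{F_0(\tau_0,\xi_0)F_1(\tau_1,\xi_1)F_2(\tau_2,\xi_2)}{\prod_{i=0}^{2}\langle\xi_i\rangle^{s_i}\langle|\tau_i|-|\xi_i|\rangle^{b_i}}\,d\tau\,d\xi \lesssim \|F_0\|_2 \|F_1\|_2 \|F_2\|_2 , $$
where $*$ denotes the convolution constraint $\xi_0+\xi_1+\xi_2=0$, $\tau_0+\tau_1+\tau_2=0$. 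I would then decompose each $F_i = \sum_{N_i,L_i} F_{i,N_i,L_i}$ into Littlewood--Paley pieces supported in $\{\langle\xi\rangle \sim N_i,\; \langle|\tau|-|\xi|\rangle \sim L_i\}$, yielding a double dyadic sum.

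The analytic core is the sharp bilinear $L^2$ cone estimate (Foschi--Klainerman in $3+1$ dimensions, see also Selberg): functions $f,g$ with the above supports obey a product bound schematically of the form $\|fg\|_{L^2} \lesssim (N_{\min})^{\alpha}(L_1 L_2)^{1/2}\|f\|_2\|g\|_2$, where $\alpha$ and the additional geometric factors depend on the relative sizes of $N_1,N_2,N_{12}$ and reflect the cap--cap--cap intersection geometry on the light cone. Combined with Cauchy--Schwarz against the third factor and Plancherel, this yields a dyadic bound on the trilinear integral of the form $C(\vec N,\vec L)\prod_i \|F_{i,N_i,L_i}\|_2$, where $\vec N=(N_0,N_1,N_2)$ and $\vec L=(L_0,L_1,L_2)$.

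The remaining task is summation in $\vec N$ and $\vec L$, split into cases according to which $N_i$ and which $L_i$ is largest. The listed conditions on $s_i,b_i$ correspond bijectively to the summability requirements in each case: the modulation conditions $b_i+b_j\geq 0$ and $b_0+b_1+b_2>\tfrac12$ are dictated by the bilinear estimate and its dualizations in $L_i$; the conditions on $s_0+s_1+s_2$ control the high-high-low, high-low-high and balanced-frequency cases; the three mixed conditions $(s_i+b_i)+2s_j+2s_k>\tfrac32$ cover the regimes where one factor is essentially pushed to the cone (so that $L_i$ can be traded for $N_i$ via the bilinear bound); and the terminal conditions $s_i+s_j \geq \max(0,-b_k)$ handle the low-frequency endpoints $N_k \lesssim 1$ where no decay from $N_k$ is available.

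The main obstacle is precisely that the result is \emph{sharp} up to the strictness of the listed inequalities: no power can be wasted in any of the many cases, so the combinatorial bookkeeping of matching each hypothesis to a specific dyadic sub-case (and verifying that the strict inequalities force exact summability with only a logarithmic-type loss absorbed by the $\epsilon$ margin) is the essential and lengthy part of the argument. Since this case analysis is carried out in full in \cite{AFS}, I would in practice simply cite the result as stated there and use it as a black box, as is standard in the wave-Sobolev literature.
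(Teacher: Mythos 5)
Your proposal is correct and matches the paper: the paper does not prove this proposition but simply cites it as a known result from d'Ancona--Foschi--Selberg \cite{AFS} (``Atlas of products for wave-Sobolev spaces on $\mathbb{R}^{1+3}$''). Your high-level sketch of the dyadic/bilinear-cone-estimate strategy underlying that reference is accurate, and your decision to ultimately cite \cite{AFS} as a black box is exactly what the paper does.
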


\begin{prop}[Null form estimates, \cite{ST} ]  
\label{Prop.1.2}
Let $\sigma_0,\sigma_1,\sigma_2,\beta_0,\beta_1,\beta_2 \in \R$. Assume that
\begin{equation*}
\left\{
\begin{aligned} 
 & 0 \le \beta_0 < \frac12 < \beta_1,\beta_2 < 1,
  \\
 & \sum \sigma_i + \beta_0 > \frac32 - (\beta_0 + \sigma_1 + \sigma_2),
  \\
 & \sum \sigma_i > \frac32 - (\sigma_0 + \beta_1 + \sigma_2),
  \\
&  \sum \sigma_i > \frac32 - (\sigma_0 + \sigma_1 + \beta_2),
  \\
  &\sum \sigma_i + \beta_0 \ge 1,
  \\
 & \min(\sigma_0 + \sigma_1, \sigma_0 + \sigma_2, \beta_0 + \sigma_1 + \sigma_2) \ge 0,
\end{aligned}
\right.
\end{equation*} 
and that the last two inequalities are not both equalities. Let
\begin{align}
\nonumber
&{\mathcal F}(B_{\pm_1,\pm_2} (\psi_{1_{\pm_1}}, \psi_{2_{\pm_2}}))(\tau_0,\xi_0) \\
\label{2}
& := \int_{\tau_1+\tau_2= \tau_0\, \xi_1+\xi_2=\xi_0} |\angle(\pm_1 \xi_1,\pm_2 \xi_2)|  \widehat{\psi_{1_{\pm_1}}}(\tau_1,\xi_1)  \widehat{\psi_{2_{\pm_2}}}(\tau_2,\xi_2) d\tau_1 d\xi_1 \, . 
\end{align}
Then we have the null form estimate
$$
  \norm{B_{(\pm_1 \xi_1,\pm_2 \xi_2)}(u,v)}_{H^{-\sigma_0,-\beta_0}}
  \lesssim
  \norm{u}_{X^{\sigma_1,\beta_1}_{\pm_1}} \norm{v}_{X^{\sigma_2,\beta_2}_{\pm 2}}\, .
$$
\end{prop}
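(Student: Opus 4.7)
My plan is to reduce the null form estimate to the standard product estimates for wave-Sobolev spaces (Proposition 1.2', taken from \cite{AFS}) by exploiting the algebraic identity that trades the angle factor for a power of the modulation. The starting point is the well-known bound
$$
\abs{\angle(\pm_1 \xi_1, \pm_2 \xi_2)}^2 \lesssim \frac{\max(|\lambda_0|,|\lambda_1|,|\lambda_2|)(|\xi_0|+|\xi_1|+|\xi_2|)}{|\xi_1|\,|\xi_2|},
$$
where $\lambda_j = \tau_j - \pm_j|\xi_j|$ for $j=1,2$ and $\lambda_0 = \tau_0 \mp_0 |\xi_0|$ for a suitably chosen sign $\pm_0$ (so that $|\lambda_0|$ can be replaced by $||\tau_0|-|\xi_0||$ in the worst-case branch). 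This follows from the identity
$(\lambda_1+\lambda_2-\lambda_0)(\pm_1|\xi_1|+\pm_2|\xi_2|+\pm_0|\xi_0|) = 2(\pm_1\pm_2|\xi_1|\,|\xi_2|-\xi_1\cdot\xi_2)$ combined with $|\pm_1\pm_2|\xi_1||\xi_2|-\xi_1\cdot\xi_2| \sim |\xi_1|\,|\xi_2|\,|\angle(\pm_1\xi_1,\pm_2\xi_2)|^2$ in the small-angle regime (and using the trivial bound $|\angle|\leq\pi$ when the angle is not small).

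First I would split the frequency interaction dyadically into the regions $\{|\xi_1|\sim|\xi_2|\gtrsim |\xi_0|\}$ (high-high-to-low), $\{|\xi_0|\sim|\xi_1|\gtrsim|\xi_2|\}$, and $\{|\xi_0|\sim|\xi_2|\gtrsim|\xi_1|\}$. In each region $(|\xi_0|+|\xi_1|+|\xi_2|)/(|\xi_1|\,|\xi_2|)$ reduces to $1/\min(|\xi_1|,|\xi_2|)$ up to harmless factors, so the angle estimate becomes $|\angle(\pm_1\xi_1,\pm_2\xi_2)| \lesssim (\max|\lambda_j|)^{1/2}/\min(|\xi_1|,|\xi_2|)^{1/2}$. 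I then split further into three subcases by which $|\lambda_j|$ is largest; in each subcase the $(\max|\lambda_j|)^{1/2}$ factor is absorbed as a weight on the corresponding variable.

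The three subcases reduce the null form inequality to standard product estimates of the form
$$
\norm{uv}_{H^{-\sigma_0,\,-\beta_0-\frac12\delta_{j,0}}} \lesssim \norm{u}_{H^{\sigma_1-\frac12(1-\delta_{j,0})\delta_{j,1},\,\beta_1-\frac12\delta_{j,1}}}\,\norm{v}_{H^{\sigma_2-\frac12(1-\delta_{j,0})\delta_{j,2},\,\beta_2-\frac12\delta_{j,2}}}
$$
(with $j\in\{0,1,2\}$ labelling which $|\lambda_j|$ dominates), after passing from $X^{s,b}_{\pm}$ to $X^{s,b}_{|\tau|=|\xi|}$ via the inequality $\|u\|_{X^{s,b}_\pm}\le\|u\|_{X^{s,b}_{|\tau|=|\xi|}}$ for $b\ge 0$. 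The shifted exponents are then verified to still satisfy the hypotheses of Proposition 1.2'; this is the place where the particular numerology in the assumptions—$\sum\sigma_i+\beta_0\ge 1$, the three asymmetric inequalities shifting $\beta_i$ to $\sigma_i$, and the side conditions $\sigma_0+\sigma_i\ge 0$—is designed precisely to survive each of the three shifts.

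The hardest part will be the endpoint case $\sum\sigma_i+\beta_0=1$, where the corresponding shifted product estimate (with $j=0$, $\beta_0\to \beta_0+\frac12$) lands on an inequality in Proposition 1.2' that is required to be strict. This is why the statement requires that the last two inequalities are not simultaneously equalities: the extra half-derivative on one of the spatial variables that one gains from the angle bound in the $j=1$ or $j=2$ subcases provides slack in that subcase, while the $j=0$ subcase is handled by the other inequality. A secondary subtlety is the sign choice $\pm_0$ for the output modulation when the frequencies are in the high-high-to-low regime, since both $|\tau_0-|\xi_0||$ and $|\tau_0+|\xi_0||$ can in principle be relevant; this is resolved by noting that $\min(|\tau_0-|\xi_0||,|\tau_0+|\xi_0||)=||\tau_0|-|\xi_0||$ controls the $H^{-\sigma_0,-\beta_0}$ norm and that the identity above is valid for either sign choice.
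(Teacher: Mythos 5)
The statement you are trying to prove is, in this paper, a \emph{black-box import}: Proposition~\ref{Prop.1.2} is labelled ``[Null form estimates, \cite{ST}]'' and the paper gives no proof at all, simply citing Selberg--Tesfahun. So there is no ``paper's own proof'' to compare against; what can be assessed is whether your sketch is a plausible route to the result.

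Your overall strategy is indeed the one used in \cite{ST} and \cite{AFS1}: write the null symbol $\angle(\pm_1\xi_1,\pm_2\xi_2)$ as (up to constants) a square root of a ratio built from the modulations $\lambda_j$ and the frequencies, split dyadically by frequency regime and by which modulation dominates, and then reduce each sub-case to the pure product estimate (Proposition~\ref{Prop.1.2'}) from \cite{AFS}. That is the correct idea. However, as written your reduction has several substantive problems.

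First, the sign and index bookkeeping in your ``shifted product estimate'' is wrong. When $|\lambda_0|$ dominates, the factor $|\lambda_0|^{1/2}$ is absorbed into the output weight by lowering the modulation exponent there: the estimate you need lands in $H^{-\sigma_0,\,-(\beta_0-\frac12)}$, not $H^{-\sigma_0,\,-\beta_0-\frac12}$; and the remaining factor $\min(|\xi_1|,|\xi_2|)^{-1/2}$ must still be accounted for by \emph{raising} the spatial exponent of the low-frequency input by $\frac12$ (e.g.\ $\sigma_1\mapsto\sigma_1+\frac12$), which your Kronecker-delta expression drops entirely in the $j=0$ case and writes with the wrong sign in the $j=1,2$ cases. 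As stated, your reduction would either fail to match the hypotheses of Proposition~\ref{Prop.1.2'} or would assert a product estimate stronger than the one that is actually available.

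Second, you use only the uniform angle bound $\angle^2\lesssim(\max_j|\lambda_j|)(|\xi_0|+|\xi_1|+|\xi_2|)/(|\xi_1||\xi_2|)$ and then observe this collapses to $\max|\lambda_j|/\min(|\xi_1|,|\xi_2|)$. But the full strength of the proposition, in particular the endpoint allowance (equality in one of $\sum\sigma_i+\beta_0\ge 1$ or $\min(\sigma_0+\sigma_1,\sigma_0+\sigma_2,\beta_0+\sigma_1+\sigma_2)\ge 0$ permitted, just not both), relies on the sharper sign-dependent estimate: in the $\pm_1=-\pm_2$, high-high-to-low regime one has the improvement $\angle^2\lesssim\max|\lambda_j|\cdot|\xi_0|/(|\xi_1||\xi_2|)$, which gains an extra power of $|\xi_0|/\max(|\xi_1|,|\xi_2|)$. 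Your explanation of why the two last hypotheses cannot both be equalities is plausible as a heuristic but is not actually carried out; verifying it requires tracking all of the inequalities of Proposition~\ref{Prop.1.2'} in each sub-case with the shifted exponents, which your sketch does not do. In sum: right approach, but the exponent shifts are written incorrectly and the endpoint/case analysis is asserted rather than verified.
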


The following multiplication law is well-known:
\begin{prop} {\bf (Sobolev multiplication law)}
\label{SML}
Let $s_0,s_1,s_2 \in \R$ . Assume
$s_0+s_1+s_2 > \frac{3}{2}$ , $s_0+s_1 \ge 0$ ,  $s_0+s_2 \ge 0$ , $s_1+s_2 \ge 0$. Then the following product estimate holds:
$$ \|uv\|_{H^{-s_0}} \lesssim \|u\|_{H^{s_1}} \|v\|_{H^{s_2}} \, .$$
\end{prop}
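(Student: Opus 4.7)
The plan is to prove the estimate by duality combined with a Littlewood--Paley decomposition in frequency space. By the pairing between $H^{s_0}$ and $H^{-s_0}$, the stated product bound is equivalent to the trilinear form estimate
$$\left| \int_{\R^3} u\, v\, w\, dx \right| \lesssim \|u\|_{H^{s_1}} \|v\|_{H^{s_2}} \|w\|_{H^{s_0}}$$
for Schwartz $u, v, w$. I would introduce inhomogeneous dyadic projectors $P_j$ ($j \ge 0$, with $P_0$ collecting all frequencies $\lesssim 1$), write $u = \sum_j u_j$, $v = \sum_k v_k$, $w = \sum_l w_l$, and expand the integral into $\sum_{j,k,l} \langle u_j v_k, w_l \rangle$. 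Since $\widehat{u_j v_k}$ is supported in $|\xi| \lesssim 2^{\max(j,k)}$, the triangle relation $\xi_0+\xi_1+\xi_2 = 0$ forces the nonvanishing terms to fall in the standard frequency trichotomy: among the three scales $2^j, 2^k, 2^l$ the two largest must be comparable, yielding three essentially distinct regimes (high-high-low, high-low-high, low-high-high).

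The core bound in each regime comes from H\"older followed by Bernstein, placing the lowest-frequency factor in $L^\infty$ and the remaining two in $L^2$. For instance in the high-high-low regime $j \sim k \ge l$ one has
$$|\langle u_j v_k, w_l\rangle| \le \|u_j\|_{L^2} \|v_k\|_{L^2} \|w_l\|_{L^\infty} \lesssim 2^{-s_1 j}\, 2^{-s_2 k}\, 2^{(\frac{3}{2}-s_0)l} \,a_j\, b_k\, c_l,$$
where $(a_j), (b_k), (c_l)$ are the (square-summable) sequences $\|u_j\|_{H^{s_1}}$, $\|v_k\|_{H^{s_2}}$, $\|w_l\|_{H^{s_0}}$. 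Summing the geometric series in $l$ first gives a factor of $2^{(\frac{3}{2}-s_0)j}$ (up to logarithms at the endpoint $s_0 = \frac{3}{2}$, harmless under the strict hypothesis), leaving $\sum_{j\sim k} 2^{(\frac{3}{2}-s_0-s_1-s_2)j}\, a_j\, b_k\, c$. Weighted Cauchy--Schwarz in $j$ (valid because $s_1+s_2 \ge 0$ keeps the $j$-weight bounded below by the extracted geometric factor) converges precisely under the strict inequality $s_0 + s_1 + s_2 > \frac{3}{2}$. The two remaining regimes are symmetric permutations of the roles of $u, v, w$, and their corresponding Cauchy--Schwarz steps require exactly the conditions $s_0 + s_1 \ge 0$ and $s_0 + s_2 \ge 0$.

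The main obstacle I expect is the bookkeeping in verifying that the placement of factors into $L^2 \times L^2 \times L^\infty$ via Bernstein is actually optimal across all orderings of the exponents $s_i$. If, for example, $s_0$ is large but $s_1$ is very negative, it is more efficient to put a high-frequency factor in $L^\infty$ and $w_l$ in $L^2$; one must therefore run through all relevant orderings of $\{s_0, s_1, s_2\}$ against the three trichotomy regimes. The endpoint cases where $s_0+s_i = 0$ or $s_1+s_2 = 0$ replace bare Cauchy--Schwarz with a Schur-type summation along the constraint diagonal. Once these cases are organized, the four hypotheses in the statement map bijectively to the convergence conditions of the associated geometric sums, and the trilinear estimate — hence the Sobolev multiplication law — follows.
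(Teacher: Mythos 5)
The paper states this proposition as well-known and gives no proof, so there is no in-paper argument to compare against. Your duality plus Littlewood--Paley approach is one of the standard ways to prove the $\R^3$ Sobolev multiplication law, and in outline it is correct: the trilinear reduction, the frequency trichotomy, the placement of the lowest-frequency dyadic block in $L^\infty$ via Bernstein, and the identification of each hypothesis with the summability condition in the corresponding trichotomy regime are all sound.

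One small piece of bookkeeping that you should make explicit: in the high-high-low regime $j\sim k\ge l$ you write that summing the geometric series in $l$ ``gives a factor of $2^{(\frac{3}{2}-s_0)j}$,'' which is only the case when $s_0<\frac{3}{2}$. When $s_0\ge\frac{3}{2}$ the $l$-sum is simply $O(\|c\|_{\ell^2})$ (logarithmic loss at $s_0=\frac{3}{2}$, absorbed by the strict hypothesis as you note), and the remaining $j\sim k$ sum then needs only $s_1+s_2\ge 0$, which is assumed; the conclusion is therefore still correct, but the stated logic silently assumes $s_0<\frac32$. The analogous split occurs in the other two regimes with the roles permuted. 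You correctly anticipate this in your paragraph about ``orderings of $\{s_0,s_1,s_2\}$''; just be aware that the resolution is a case distinction on whether the extracted exponent is positive, not a repositioning of the $L^\infty$ factor (the lowest-frequency block is always the one to put in $L^\infty$ within a given trichotomy regime). With that caveat handled, the argument is complete and proves the proposition.
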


\begin{prop}
	\label{Prop.2}
	\begin{enumerate}
		\item For $2 < q \le \infty $ , $ 2 \le r < \infty$ , $ \frac{1}{q} = \frac{1}{2}-\frac{1}{r}$ , $ \mu = 3(\frac{1}{2}-\frac{1}{r})-\frac{1}{q}$ the following estimate holds
		\begin{equation}
			\label{15}
			\|u\|_{L^q_t L^r_x} \lesssim \|u\|_{X^{\mu,\frac{1}{2}+}_{|\tau|=|\xi|}} \, . 
		\end{equation}
		\item For $k \ge 0$ , $ p < \infty$ and $ \frac{1}{4} \ge \frac{1}{p} \ge \frac{1}{4} - \frac{k}{3}$ the following estimate holds:
		\begin{equation}
			\label{Tao}
			\|u\|_{ L^p_x L^2_t} \lesssim \|u\|_{X^{k+\frac{1}{4},\frac{1}{2}+}_{|\tau|=|\xi|}} \, . 
		\end{equation}
	\end{enumerate}
\end{prop}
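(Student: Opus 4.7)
The plan is to derive both inequalities from Strichartz-type estimates for the free wave equation via the \emph{transfer principle}. Given $\|u\|_{X^{s,b}_{|\tau|=|\xi|}} < \infty$ with $b>\tfrac{1}{2}$, Fourier inversion in $\tau$ writes $u$ as an absolutely convergent (by Cauchy--Schwarz, using $b>\tfrac12$) superposition of modulated half-wave solutions $e^{\pm it|\nabla|}f_\pm$. Consequently, for any translation-invariant spacetime norm $Y$, the free-wave bound $\|e^{\pm it|\nabla|}g\|_Y \lesssim \|g\|_{H^s}$ upgrades to $\|u\|_Y \lesssim \|u\|_{X^{s,1/2+}_{|\tau|=|\xi|}}$. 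Both $L^q_t L^r_x$ and $L^p_x L^2_t$ are translation invariant, so it suffices to prove the two estimates for $u=e^{\pm it|\nabla|}f$.

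Part (1) then reduces to a classical Strichartz inequality. The hypotheses $\tfrac{1}{q}=\tfrac{1}{2}-\tfrac{1}{r}$ (equivalently $\tfrac{1}{q}+\tfrac{1}{r}=\tfrac{1}{2}$) with $2<q\le\infty$, $2\le r<\infty$, together with $\mu=\tfrac{3}{2}-\tfrac{3}{r}-\tfrac{1}{q}$, place $(q,r,\mu)$ on the sharp wave-admissible line in $n=3$ space dimensions, and the desired bound $\|e^{\pm it|\nabla|}f\|_{L^q_t L^r_x}\lesssim \|f\|_{\dot H^\mu}$ is exactly the classical Ginibre--Velo/Keel--Tao wave Strichartz estimate. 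Under the stated ranges one computes $\mu=1-\tfrac{2}{r}\ge 0$, so passing from $\dot H^\mu$ to $H^\mu$ on the right side is trivial. Combining with the transfer principle yields (\ref{15}).

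For part (2) the base case is $p=4$, $k=0$, i.e.\ $\|e^{\pm it|\nabla|}f\|_{L^4_x L^2_t}\lesssim\|f\|_{\dot H^{1/4}}$. I would prove this by a $TT^*$ argument: after dualising one is reduced to estimating a convolution operator in $(t,x)$ whose kernel is the spacetime Fourier transform of a smooth density on the light cone $\tau=\pm|\xi|$; the decay of this kernel (Stein's oscillatory-integral bounds for the spherical measure on $S^2$) combined with Hardy--Littlewood--Sobolev yields the claimed $L^{4/3}_x L^2_t \to L^4_x L^2_t$ boundedness. For general $k\ge 0$ with $\tfrac{1}{p}=\tfrac{1}{4}-\tfrac{k}{3}$, apply the base estimate to $D^k f$ (using that $D^k$ commutes with the half-wave propagator) to obtain $\|D^k u\|_{L^4_x L^2_t}\lesssim\|f\|_{\dot H^{k+1/4}}$, and then invoke the vector-valued Sobolev embedding
\[
\dot W^{k,4}(\mathbb{R}^3;L^2_t)\hookrightarrow L^p(\mathbb{R}^3;L^2_t)
\]
to conclude $\|u\|_{L^p_x L^2_t}\lesssim\|f\|_{H^{k+1/4}}$ at the critical $\tfrac{1}{p}=\tfrac{1}{4}-\tfrac{k}{3}$. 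The full range $\tfrac{1}{p}\ge \tfrac{1}{4}-\tfrac{k}{3}$ stated in the proposition then follows by monotonicity of the inhomogeneous norm $\|\cdot\|_{X^{k+1/4,1/2+}}$ in $k$: one first proves the sharp case and then uses that larger $k$ gives a larger right-hand side.

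The main technical obstacle is the base local-smoothing bound $\|e^{\pm it|\nabla|}f\|_{L^4_x L^2_t}\lesssim\|f\|_{\dot H^{1/4}}$ in part (2); the rest of the argument consists of the transfer-principle routine together with classical Strichartz and Sobolev-embedding machinery, and both claims follow by combining these ingredients as above.
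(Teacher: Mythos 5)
Your treatment of part (1) coincides with the paper's: cite the classical wave Strichartz estimate and invoke the transfer principle, so nothing further is needed there.

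For part (2) you take a genuinely different route, and it is worth spelling out the contrast. The paper starts from the estimate of \cite{KMBT}, Thm.~B.2,
$$\|\mathcal{F}_t u\|_{L^2_\tau L^4_x}\lesssim \|u_0\|_{\dot H^{1/4}}\quad\text{for }u=e^{it|\nabla|}u_0,$$
which is genuinely stronger than the base case you use ($\|u\|_{L^4_xL^2_t}\lesssim\|u_0\|_{\dot H^{1/4}}$ follows from it by Plancherel in $t$ and Minkowski). The paper then stays entirely on the $\tau$--side: Plancherel gives $\|u\|_{L^p_xL^2_t}=\|\mathcal{F}_t u\|_{L^p_xL^2_\tau}$, Minkowski (using $p\ge 2$) gives $\le\|\mathcal{F}_t u\|_{L^2_\tau L^p_x}$, and the \emph{scalar} Sobolev embedding $H^{k,4}_x\hookrightarrow L^p_x$ applied at each fixed $\tau$ finishes. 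By contrast, you work in physical time throughout: you apply the weaker base case to $D^k f$ and then must invoke a \emph{vector-valued} Sobolev embedding $\dot W^{k,4}(\mathbb R^3;L^2_t)\hookrightarrow L^p(\mathbb R^3;L^2_t)$. This is correct (the target $L^2_t$ is a Hilbert space, so the scalar Littlewood--Paley/Sobolev theory transfers), and your monotonicity-in-$k$ observation for the non-sharp range $\tfrac1p>\tfrac14-\tfrac k3$ is valid as well, since $\langle\xi\rangle^{k'+1/4}\le\langle\xi\rangle^{k+1/4}$ for $k'\le k$. So both routes get there; the paper's avoids any appeal to Banach-space-valued Sobolev embeddings, which is a small but real simplification.

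One caveat on your sketch of the base estimate. You propose a $TT^*$ argument with kernel decay plus Hardy--Littlewood--Sobolev to get the $L^{4/3}_xL^2_t\to L^4_xL^2_t$ bound. After Plancherel in $t$ and Minkowski this reduces, for each fixed $\tau$, to a bound of the form $\|\widehat{d\sigma_\tau}\ast g\|_{L^4_x}\lesssim\|g\|_{L^{4/3}_x}$, which is precisely the $R^*R$ form of the Stein--Tomas restriction theorem for the sphere $|\xi|=\tau$. The exponent $4=\tfrac{2(n+1)}{n-1}$ in $n=3$ is the Stein--Tomas endpoint, and the kernel decay $|\widehat{d\sigma_\tau}(x)|\lesssim(1+\tau|x|)^{-1}$ together with HLS alone does \emph{not} reach this endpoint; one needs the full Stein--Tomas mechanism (complex interpolation or a dyadic decomposition exploiting oscillation). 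Since the paper simply cites \cite{KMBT} or \cite{T} for this input (and you could do likewise), this is not a gap in what you are allowed to use, but the claim that HLS suffices for the endpoint is not correct as stated.
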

\begin{proof}
	(\ref{15}) is the Strichartz type estimate, which can be found for e.g. in \cite{GV}, Prop. 2.1, combined with the transfer principle (cf. e.g. \cite{KS}, Prop. 3.5).
	
	Concerning (\ref{Tao}) we refer to \cite{T}, Prop. 4.1, or use \cite{KMBT}, Thm. B.2:
	$$ \|\mathcal{F}_t u \|_{L^2_{\tau} L_x^4} \lesssim \|u_0\|_{\dot{H}^{\frac{1}{4}}} \, , $$
	if $u=e^{it |\nabla|} u_0$ and $\mathcal{F}_t$ denotes the Fourier transform with respect to time. This immediately implies by Plancherel, Minkowski's inequality and Sobolev's embedding theorem
	$$\|u\|_{L^p_x L^2_t} = \|\mathcal{F}_t u \|_{L^p_x L^2_\tau} \le \|\mathcal{F}_t u \|_{L^2_{\tau} L^p_x} \lesssim \|\mathcal{F}_t u \|_{L^2_{\tau} H^{k,4}_x} \lesssim \|u_0\|_{H^{k+\frac{1}{4}}} \, . $$
	The transfer principle implies (\ref{Tao}).
\end{proof}

\section{Preliminary local well-posedness}
Defining
\begin{align*}
	A^{df}_{\pm} = \frac{1}{2}(A^{df} \mp i \langle \nabla \rangle^{-1} \partial_t A^{df}) & \Longleftrightarrow A^{df} = A^{df}_+ + A_-^{df} \, , \, \partial_t A^{df} = i \langle \nabla \rangle(A^{df}_+ - A^{df}_-)
\end{align*}
we can rewrite (\ref{7}) as
\begin{align}
	\label{7'}
	(i \partial_t \pm \langle \nabla \rangle)A_{j,\pm}^{df} & = \mp 2^{-1} \langle \nabla \rangle^{-1} ( R.H.S. \, of \, (\ref{7}) - A^{df}_j) \, .
\end{align}
with initial data
\begin{align}
	\label{1.15*'}
	A^{df}_\pm(0) & = \frac{1}{2}(A^{df}(0) \mp i^{-1} \langle \nabla \rangle^{-1} (\partial_t A^{df})(0)) \, .
\end{align}

	We now state and prove a preliminary local well-posedness result for (\ref{6}),(\ref{7}), (\ref{8}), for which it is essential to have data for $A$ with vanishing curl-free part. 

\begin{prop}
	\label{Prop}
	Assume $s>\frac{3}{4}$ , $l > \frac{1}{4}$ ,  $s \ge l \ge s-1$ , $2s-l > 1$ and $l-s \ge -\half$.
	Let
	$a_0^{df}  \in H^s$ , $a_1^{df} \in H^{s-1}$ , $\psi_0 \in H^l$ be given  satisfying the Gauss law constraint $\partial^j a_j^1 = -\partial^j a_j^1+ \langle \psi^i_0,T^a_{ij} \psi^j_0 \rangle T_a$ (necessary by (\ref{0.1}) with $\nu=0$) with
	$$ \|a_0^{df}\|_{H^s} + \|a_1^{df}\|_{H^{s-1}} + \|\psi_0 \|_{H^l} \le \epsilon_0$$
	where $\epsilon_0 >0$ is sufficiently small. Then the system (\ref{6}),(\ref{7})(\ref{8}) with initial conditions
	$$ A^{df}(0)=a_0^{df} \, , \, (\partial_t A^{df})(0) = {a_1}^{df} \, , \, A^{cf}(0) = 0 \, , \, \psi(0)= \psi_0$$
	has a unique local solution 
	$$ A= A^{df}_+ + A^{df}_- + A^{cf} \, , \, \psi = \psi_+ + \psi_- \, , $$
	where 
	$$  A^{df}_{\pm} \in X^{s,\frac{3}{4}+}_{\pm}[0,1] \, , \, A^{cf} \in X^{s+\frac{1}{4},\frac{1}{2}+}_{\tau=0}[0,1] \, , \, \partial_t A^{cf} \in C^0([0,1],H^{s-1}) \, , \, \psi_{\pm} \in X^{l,\half+}_{\pm}[0,1] .$$
	Uniqueness holds (of course) for not necessarily vanishing initial data $A^{cf}(0) = a^{cf}$. The solution satisfies
	$$ A \in C^0([0,1],H^s) \cap C^1([0,1],H^{s-1}) \, , \, \psi \in C^0([0,1],H^l) \, .$$
\end{prop}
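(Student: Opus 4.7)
My plan is to solve the system by a Picard/contraction iteration on a product of Bourgain--Klainerman--Machedon spaces restricted to $[0,1]$. First I rewrite (\ref{7}) as a pair of first-order half-wave equations via (\ref{7'}), and take as unknowns the tuple
\[
(A^{df}_+, A^{df}_-, A^{cf}, \psi_+, \psi_-) \in X^{s,\frac{3}{4}+}_{+}[0,1] \times X^{s,\frac{3}{4}+}_{-}[0,1] \times X^{s+\frac{1}{4},\frac{1}{2}+}_{\tau=0}[0,1] \times X^{l,\frac{1}{2}+}_{+}[0,1] \times X^{l,\frac{1}{2}+}_{-}[0,1].
\]
Since $A^{cf}(0)=0$, equation (\ref{6}) is solved in integral form $A^{cf}(t)=\int_0^t (\mathrm{R.H.S.})(s)\,ds$, while (\ref{7'}) and (\ref{8}) are solved by the Duhamel formula for the corresponding half-wave evolutions. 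The iteration will be run on a ball of radius of order $\epsilon_0$.

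The linear step is standard: for $\tfrac12<b\le 1$ and $T\le 1$ one has the inhomogeneous bound
\[
\|u\|_{X^{\sigma,b}_{\pm}[0,T]}\lesssim \|u(0)\|_{H^{\sigma}} + \|(i\partial_t\pm\langle\nabla\rangle)u\|_{X^{\sigma,b-1}_{\pm}[0,T]},
\]
together with its massless analog for (\ref{8}) and the elementary time-integration estimate $\|v\|_{X^{m,b}_{\tau=0}[0,1]}\lesssim \|v(0)\|_{H^m}+\|\partial_tv\|_{X^{m,b-1}_{\tau=0}[0,1]}$. Via the embedding $X^{\sigma,b}_{\pm}[0,1]\hookrightarrow C^0([0,1],H^\sigma)$ for $b>\tfrac12$, these give the claimed function-space regularity of $A^{df}_\pm$, $\psi_\pm$ and $A^{cf}$; the $C^1([0,1],H^{s-1})$ bound for $A^{cf}$ is read off directly from (\ref{6}) using Proposition \ref{SML}, where the hypotheses $s\ge l\ge s-1$, $2s-l>1$ and $l-s\ge -\frac12$ are exactly what is needed to close the products.

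The multilinear step is the core of the proof and reduces to estimating each nonlinearity of (\ref{6}),(\ref{7}),(\ref{8}) in the appropriate dual $X^{\cdot,b-1}$ norm. Following Tao \cite{T1}, the projected bilinear terms $P[\mathrm{div}\,A^{cf},A_j]$, $P[A_i,\partial^i A_j]$ and $P[A^i,\partial_j A_i]$ in (\ref{7}) carry a null structure (of type $Q_{\mu\nu}$) once $A^{df}$ is split as $A^{df}_++A^{df}_-$, and are controlled by the null-form estimate of Proposition \ref{Prop.1.2} with $\sigma_1=\sigma_2=s$, $\sigma_0=1-s$, $\beta_0=\tfrac14-$, $\beta_1=\beta_2=\tfrac34+$; mixed pieces in which one factor is $A^{cf}$ exploit its additional $\tfrac14$ derivative and are handled by Proposition \ref{Prop.1.2'} together with the Strichartz bounds (\ref{15})--(\ref{Tao}). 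The Yang-Mills source $P\langle \psi,\alpha_\nu\psi\rangle$, after inserting $\psi=\psi_++\psi_-$ and applying (\ref{2.7}), turns into a true null form in the pairs $\Pi_{\pm_1}\psi\cdot\Pi_{\pm_2}\psi$ in the manner of Huh--Oh \cite{HO}, estimable again by Proposition \ref{Prop.1.2}. Symmetrically, the Dirac coupling $\Pi_\pm(A_k^a\alpha^k T_{ij}^a\psi^j)$ in (\ref{8}), restricted to $A=A^{df}$, reveals a null structure in the $A$--$\psi$ pairing through (\ref{2.6'})--(\ref{2.7}), while the $A^{cf}$-piece of the same coupling is non-null but is absorbed by the gain of $\tfrac14$ derivative via Proposition \ref{Prop.1.2'}. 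The cubic term $P[A^i,[A_i,A_j]]$ and the quadratic terms $|\nabla|^{-2}\nabla[A_i,\partial_t A^i]$, $|\nabla|^{-2}\nabla\langle\psi,\psi\rangle$ in (\ref{6}) are bounded by iterated applications of Propositions \ref{Prop.1.2'} and \ref{SML}, the operator $|\nabla|^{-2}\nabla$ in (\ref{6}) providing precisely the derivative gain that matches the $s+\tfrac14$ index chosen for $A^{cf}$.

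The main obstacle I anticipate is the single trilinear estimate flagged in the introduction as responsible for the smallness assumption: one of the cubic interactions (schematically, from substituting (\ref{6}) into (\ref{7}) or from the cubic self-interaction of $A$) closes only at a scale-critical threshold, with no room to gain smallness from $T$ since we are forced to work on the fixed interval $[0,1]$. Taking $\epsilon_0$ small enough makes the corresponding Lipschitz constant on the ball of radius $c\epsilon_0$ strictly less than $1$, so the Banach fixed-point theorem delivers existence and uniqueness of the tuple; unconditional uniqueness with respect to the initial curl-free part $a^{cf}$ follows because the difference of two solutions with identical $(a_0^{df},a_1^{df},a^{cf},\psi_0)$ solves the same linearised problem with zero data, to which the same contraction estimates apply.
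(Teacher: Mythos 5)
Your overall strategy — contraction in the same product of $X^{s,b}$-type spaces, Duhamel for the half-wave evolutions, the integral form of~(\ref{6}) using $A^{cf}(0)=0$, and reliance on null structure plus smallness — coincides with the paper's proof. However, you misattribute the source of the smallness requirement: it is not a cubic interaction obtained by substituting (\ref{6}) into (\ref{7}), nor the $[A,[A,A]]$ term. It is the \emph{bilinear} mixed estimate (\ref{18}), namely $\| |\nabla|^{-1}(\phi_1\partial_t\phi_2)\|_{X^{s+\frac14,-\frac12+\epsilon}_{\tau=0}}\lesssim\|\phi_1\|_{X^{s+\frac14,\frac12+\epsilon}_{\tau=0}}\|\phi_2\|_{X^{s,\frac34+\epsilon}_{|\tau|=|\xi|}}$, arising from the $A^{cf}$--$A^{df}$ piece of $|\nabla|^{-2}\nabla[A_i,\partial_tA^i]$ in~(\ref{6}). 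The modulation weight on the left sits exactly at $-\frac12+\epsilon$; since it cannot be lowered to $-\frac12+\epsilon+$, one cannot extract a factor $T^\delta$ when localising the $A^{cf}$-equation in time, and the iteration closes only under a smallness hypothesis. Identifying this precisely matters, because one needs to know which estimate must \emph{not} be weakened in the bookkeeping.

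You also understate the work behind the $A^{cf}$-controlling terms. Estimates (\ref{16})--(\ref{18}) and (\ref{29}), (\ref{30}), (\ref{43}) are not corollaries of Propositions \ref{Prop.1.2'} and \ref{SML}: one must split into modulation regimes $||\tau|-|\xi||\gtrsim|\xi|$ versus $\ll|\xi|$, invoke the averaging principle and a Schur test over unit-width $\tau$-slabs in Tao's style, and combine (\ref{Tao}) with interpolated Strichartz bounds. Finally, a small parameter slip: after the null-form identity in (\ref{50}) redistributes the derivative in $P[A_i,\partial^iA_j]$, the relevant application of Proposition \ref{Prop.1.2} is at $\sigma_1=s$, $\sigma_2=s-1$ (not $\sigma_1=\sigma_2=s$), reflecting that one derivative still lands on one of the factors.
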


We want to use a contraction argument for  $A_{\pm}^{df} \in X_{\pm}^{s,\frac{3}{4}+\epsilon}[0,1] \, , \, A^{cf} \in X^{s+\frac{1}{4},\frac{1}{2}+\epsilon}_{\tau=0}$ $[0,1]$ , $\partial_t A^{cf} \in C^0([0,1],H^{s-1}$ ) and $\psi_{\pm} \in X_{\pm}^{l,\half+ \epsilon}[0,1]$ .
Provided that our small data assumption holds this can be reduced  by well-known arguments to suitable multilinear estimates of the right hand sides of these equations. 
For (\ref{7'}) e.g. we make use of the following well-known estimate:
$$
\|A^{df}_{\pm}\|_{X^{s,b}_{\pm}[0,1]} \lesssim \|A^{df}_{\pm}(0)\|_{H^s} +  \| R.H.S. \, of \, (\ref{7'}) \|_{X^{s,b-1}_{\pm}[0,1]} \, , $$
which holds for $s\in{\mathbb R}$  , $\frac{1}{2} < b \le 1$ .
For (\ref{6}) we make use of the estimate:
$$
\|A^{cf}\|_{X^{s+\frac{1}{4},b}_{\tau=0}[0,1]} \lesssim   \| R.H.S. \, of \, (\ref{6}) \|_{X^{s+\frac{1}{4},b-1}_{\pm}[0,1]} \, . $$
Here it is essential, that no term containing $A^{cf}(0)$ appears on the right hand, because we do not want to assume $A^{cf}(0) \in H^{s+\frac{1}{4}}$ . Therefore in a first step we assume $A^{cf}(0)=0$ , which we later remove by an application of a suitable gauge transform. We are forced to admit the same parameter $b$ on both sides of the latter inequality at one point, which prevents a large data result.

We now show that all the critical terms in (\ref{6}), (\ref{7}) and (\ref{8}), namely the quadratic terms which contain only $A^{df}$ or $\psi_{\pm}$ have null structure. Those quadratic terms which contain $A^{cf}$ are less critical, because $A^{cf}$ is shown to be more regular than $A^{df}$, and the cubic terms are also less critical, because they contain no derivatives.

What we have to prove are estimates for the right hand sides of (\ref{6}), (\ref{7'}) and (\ref{8}).

First we consider  the terms which do not contain $A^{cf}$ .

For the right hand side of (\ref{7'}) we have to prove the following estimates:
\begin{align}
	\label{7.1}
	\|P[A_{i,\pm_1}^{df},\partial^i A^{df}_{j,\pm_2}]\|_{H^{s-1,-\frac{1}{4}++}} & \lesssim \|A_{i,\pm_1}^{df}\|_{X^{s,\frac{3}{4}+}_{\pm_1}} \|A_{j,\pm_2}^{df}\|_{X^{s-1,\frac{3}{4}+}_{\pm_2}} \\
	\label{7.2}
		\|P[A_{\pm_1}^{df,i},\partial^j A^{df}_{i,\pm_2}]\|_{H^{s-1,-\frac{1}{4}++}} & \lesssim \|A_{i,\pm_1}^{df}\|_{X^{s,\frac{3}{4}+}_{\pm_1}} \|A_{i,\pm_2}^{df}\|_{X^{s-1,\frac{3}{4}+}_{\pm_2}} \\
		\label{7.3}
		\|P \langle \psi_{1,\pm_1},\alpha_j \psi_{2,\pm_2} \rangle \|_{X^{s-1,-\frac{1}{4}++}_{\pm_0}} & \lesssim \|\psi_{1,\pm_1}\|_{X^{l,\half+}_{\pm_1}}\|\psi_{2,\pm_2}\|_{X^{l,\half+}_{\pm_2}}
\end{align}
Concerning the right side of (\ref{8}), ignoring the irrelevant term $T^a_{ij}$ , we have to prove
\begin{equation}
	\label{8.1}
	\|\Pi_{\pm_0} (A_{k,df}^{\pm} \alpha^k \psi) \|_{X^{l,-\half++}_{\pm_1}} \lesssim \|\psi\|_{X^{l,\half+}_{\pm_1}} \|A_{k,df}^{\pm} \|_{X^{s,\frac{3}{4}+}_{\pm}} \, . 
\end{equation}
 In order to control $A^{cf}$ in (\ref{6}) we need
\begin{align}
	\label{16}
	\| |\nabla|^{-1} (\phi_1 \partial_t \phi_2)\|_{X^{s+\frac{1}{4},-\frac{1}{2}+\epsilon+}_{\tau=0}} &\lesssim \|\phi_1\|_{X^{s,\frac{3}{4}+\epsilon}_{|\tau|=|\xi|}} \|\phi_2\|_{X^{s,\frac{3}{4}+\epsilon}_{|\tau|=|\xi|}} \\
	\label{17}
	\| |\nabla|^{-1} (\phi_1 \partial_t \phi_2)\|_{X^{s+\frac{1}{4},-\frac{1}{2}+2\epsilon-}_{\tau=0}} &\lesssim \|\phi_1\|_{X^{s+\frac{1}{4},\frac{1}{2}+\epsilon}_{\tau=0}} \|\phi_2\|_{X^{s+\frac{1}{4},\frac{1}{2}+\epsilon}_{\tau=0}} \\
	\label{18}
	\| |\nabla|^{-1} (\phi_1 \partial_t \phi_2)\|_{X^{s+\frac{1}{4},-\frac{1}{2}+\epsilon}_{\tau=0}} &+ \| |\nabla|^{-1} (\phi_2 \partial_t \phi_1)\|_{X^{s+\frac{1}{4},-\frac{1}{2}+\epsilon}_{\tau=0}} \\ \nonumber
	&\lesssim \|\phi_1\|_{X^{s+\frac{1}{4},\frac{1}{2}+\epsilon}_{\tau=0}} \|\phi_2\|_{X^{s,\frac{3}{4}+\epsilon}_{|\tau|=|\xi|}} \, , \\
	\label{17a}
	\| |\nabla|^{-1} (\psi_1 \psi_2)\|_{X^{s+\frac{1}{4},-\half+\epsilon+}_{\tau =0}} & \lesssim \|\psi_1\|_{X^{l,\half+\epsilon}_{|\tau|=|\xi|}}\|\psi_2\|_{X^{l,\half+\epsilon}_{|\tau|=|\xi|}} \, .
\end{align} 
In order to control $\partial_t A^{cf}$  we need
\begin{align}
	\label{19}
	\| |\nabla|^{-1} (A_1 \partial_t A_2)\|_{C^0(H^{s-1})} \lesssim &
	(\|A_1^{cf}\|_{X^{s+\frac{1}{4},\frac{1}{2}+}_{\tau=0}} + \sum_{\pm} 
	\|A^{df}_{1\pm}\|_{X^{s,\frac{1}{2}+}_{\pm}})\\
	\nonumber
	&(\|\partial_t 
	A^{cf}_2\|_{C^0(H^{s-1})} +  \sum_{\pm} \|A^{df}_{2\pm}\|_{X^{s,\frac{1}{2}+}_{\pm}}) \, .
\end{align} 
and
\begin{equation}
	\label{19a}
	\| |\nabla|^{-2} \nabla \langle \psi_i, T^a_{ij} \psi_j \rangle T_a \|_{C^0(H^{s-1})} \lesssim \sum_{\pm_1} \|\psi_{i,\pm}\|_{X^{l,\half+}_{\pm_1}} \sum_{\pm_2} \|\psi_{j,\pm}\|_{X^{l,\half+}_{\pm_2}} \, . 
\end{equation}                        
Concerning (\ref{0.2}) it remains to consider
the terms, which contain a factor  $A^{cf}$. We need
\begin{equation}
	\label{29}
	\| \nabla A^{cf} A^{df} \|_{X^{s-1,-\frac{1}{4}+2\epsilon}_{|\tau|=|\xi|}} +
	\| A^{cf} \nabla A^{df} \|_{X^{s-1,-\frac{1}{4}+2\epsilon}_{|\tau|=|\xi|}} \lesssim \|A^{cf}\|_{X^{s+\frac{1}{4},\frac{1}{2}+\epsilon}_{\tau =0}}  \|A^{df}\|_{X^{s,\frac{3}{4}+\epsilon}_{|\tau|=|\xi|}} 
\end{equation}
and
\begin{equation}
	\label{30}
	\| \nabla A^{cf} A^{cf} \|_{X^{s-1,-\frac{1}{4}+2\epsilon}_{|\tau|=|\xi|}}  \lesssim \|A^{cf}\|_{X^{s+\frac{1}{4},\frac{1}{2}+\epsilon}_{\tau =0}}^2  \, .
\end{equation}
All the cubic terms are estimated by
\begin{equation}
	\label{31}
	\| A_1 A_2 A_3 \|_{X^{s-1,-\frac{1}{4}+2\epsilon}_{|\tau|=|\xi|}} \lesssim \prod_{i=1}^3 \min(\|A_i\|_{X^{s,\frac{3}{4}+\epsilon}_{|\tau|=|\xi|}},\|A_i\|_{X^{s+\frac{1}{4},\frac{1}{2}+\epsilon}_{\tau =0}} ) \, .
\end{equation}
Concerning (\ref{0.3}) it remains to prove the following estimate:
\begin{equation}
	\label{43}
	\|A^{cf} \psi\|_{X^{l,-\half++}_{|\tau|=|\xi|}} \lesssim \|A^{cf}\|_{X^{s+\frac{1}{4},\half+}_{\tau =0}} \|\psi\|_{X^{l,\half+}_{|\tau|=|\xi|}} \, .
\end{equation}

\begin{proof}[Proof of (\ref{7.1})] 
	We conclude
\begin{align}
	\nonumber
	&[A^{df}_i,\partial^i A^{df}] = [R^k(R_i A_k - R_k A_i),\partial^i A^{df}] \\
	\nonumber
	&= \frac{1}{2} \big([R^k(R_i A_k - R_k A_i),\partial^i A^{df}] + [R^i(R_k A_i - R_i A_k),\partial^k A^{df}]\big) \\
	\nonumber
	&=\frac{1}{2} \big([R^k(R_i A_k - R_k A_i),\partial^i A^{df}] - [R^i(R_i A_k - R_k A_i),\partial^k A^{df}]\big) \\
	\label{50}
	&= \frac{1}{2} Q^{ik} [ |\nabla|^{-1}(R_i A_k - R_k A_i),A^{df}] \,,
\end{align}
where
$$ Q_{ij}[u,v]  := [\partial_i u,\partial_jv] - [\partial_j u,\partial_i v] = Q_{ij}(u,v) + Q_{ji}(v,u) $$
with the standard null form
$$ Q_{ij}(u,v) := \partial_i u \partial_j v - \partial_j u \partial_i v \, . $$ 
Thus, ignoring $P$, which is a bounded operator, we obtain
\begin{equation}
	\label{N2}
	P[A_i^{df},\partial^i A^{df}] \sim \sum Q_{ik}[|\nabla|^{-1} A^{df},A^{df}] \, .
\end{equation}
It is well-known (cf. e.g. \cite{ST}) that the bilinear form $Q^{jk}_{\pm_{1}, \pm_{2}}$ ,  defined by
\begin{align*}
	& Q^{jk}_{\pm_{1}, \pm_{2}} (\phi_{1_{\pm_{1}}}, \phi_{2_{ \pm_{2}}}) 
	:= R^j_{\pm_{1}} \phi_{1_{\pm_{1}}} R^k_{\pm_{2}} \phi_{2_{ \pm_{2}}} - R^k_{\pm_{2}} \phi_{1_{ \pm_{1}}} R^j_{\pm_{1}} \phi_{1_{ \pm_{2}}} \, ,
\end{align*}
similarly to the standard null form $Q_{jk}$ , which is defined by replacing the modified Riesz transforms $R^k_{\pm}$ by $\partial^k$, fulfills the following estimate:
$$  Q^{jk}_{\pm_{1}, \pm_{2}} (\phi_{1_{ \pm_{1}}}, \phi_{2_{ \pm_{2}}}) \precsim B_{\pm_1,\pm_2}(\psi_{1_{\pm_1}},\psi_{2_{\pm_2}} )\, . $$
Let  $u \precsim v$ be defined by $|\widehat{u}| \lesssim |\widehat{v}|$ . 
We have to prove
$$ \|B_{\pm_1,\pm_2}(u,v) \|_{H^{s-1,-\frac{1}{4}+}} \lesssim \|u\|_{X^{s,\frac{3}{4}+}_{\pm_1}} \|v\|_{X^{s-1,\frac{3}{4}+}_{\pm_2}} \,. $$
This is implied by Prop. \ref{Prop.1.2} with parameters $\sigma_0=1-s$ , $\sigma_1 =s$ , $\sigma_2=s-1$ , $\beta_0=\frac{1}{4}-$ , $\beta_1=\beta_2=\frac{3}{4}+$ , provided $s >\frac{3}{4}$ .
\end{proof}

\begin{proof}[Proof of (\ref{7.2})]
\begin{align*}
(P(A_i^{df}  \nabla A_i^{df}))_j  & = R^k(R_j(A_i^{df} \partial_k A_i^{df}) - R_k( A_i^{df} \partial_j  A_i^{df})) \\
	& = |\nabla|^{-2} \partial^k(\partial_j ( A_i^{df} \partial_k  A_i^{df}) - \partial_k ( A_i^{df} \partial_j  A_i^{df}))) \\
	& = |\nabla|^{-2} \partial^k(\partial_j  A_i^{df} \partial_k  A_i^{df} - \partial_k  A_i^{df} \partial_j  A_i^{df}) \\
	& = |\nabla|^{-2} \partial^k Q_{jk}( A_i^{df}, A_i^{df})
\end{align*}
so that
\begin{equation}
	\label{N3}
	P[ A_i^{df},\nabla  A_i^{df}] \sim \sum |\nabla|^{-1} Q_{jk} [ A^{df}, A^{df}] \, .
\end{equation}

We have to prove
$$\| Q_{jk}(u,v) \|_{H^{s-2,-\frac{1}{4}+}} \lesssim \|u\|_{H^{s,\frac{3}{4}+}} \|v\|_{H^{s,\frac{3}{4}+}} \, . $$
We use the estimate 
$$ Q_{jk}(u,v) \precsim D^{\half} D_-^{\half} (D^{\half} u D^{\half} v) + D^{\half}(D^{\half} D_-^{\half} u D^{\half} v) + D^{\half}(D^{\half} u D^{\half} D_-^{\half} v) \, , $$
which was proven by \cite{KM2}, Prop. 1.
This reduces the proof to the estimates
\begin{align*}
	\|uv\|_{H^{s-\frac{3}{2},\frac{1}{4}+}} & \lesssim \|u\|_{H^{s-\half,\frac{3}{4}+}} \|v\|_{H^{s-\half,\frac{3}{4}+}} \, , \\
		\|uv\|_{H^{s-\frac{3}{2},-\frac{1}{4}+}} & \lesssim \|u\|_{H^{s-\half,\frac{1}{4}+}} \|v\|_{H^{s-\half,\frac{3}{4}+}} \, .	
\end{align*}
Both are implied by Prop. \ref{Prop.1.2'} with parameters $s_0= \frac{3}{2}-s$ , $s_1=s_2= s-\half$ , and $b_0=-\frac{1}{4}-$ , $b_1=b_2=\frac{3}{4}+$ for the first one and $b_0= \frac{1}{4}-$ , $b_1=\frac{1}{4}+$ , $b_2=\frac{3}{4}+$ for the second one, which both require the assumption $ s >\frac {3}{4}$ . 
\end{proof}

\begin{proof}[Proof of (\ref{7.3})] 
	Using the definition of $P$ and ignoring the irrelevant term $T^a$ we have to prove
	\begin{equation}
		\nonumber
		\| R^k(R_j \langle \psi_1,\alpha_k \psi_2 \rangle - R_k \langle \psi_1,\alpha_j \psi_2 \rangle) \|_{X_{\pm_0}^{s-1,-\frac{1}{4}++}} \lesssim \|\psi_1\|_{X^{l,\half+}_{\pm_1}} \|\psi_2\|_{X^{l,\half+}_{\pm_2}} \, .
	\end{equation}

	We obtain by (\ref{2.7}) :
	\begin{align*}
		&   R^k(R_j \langle \psi_1,\alpha_k \psi_2 \rangle - R_k \langle \psi_1,\alpha_j \psi_2 \rangle)\\
		&=  \sum_{\pm_1,\pm_2}R^k (R_j \langle \psi_{1_{\pm_1}}, \alpha_k \Pi_{\pm_2} \psi_{2_{\pm_2}} \rangle - R_k \langle \psi_{1_{\pm_1}}, \alpha_j \Pi_{\pm_2} \psi_{2_{\pm_2}} \rangle) \\
		& = \sum_{\pm_1,\pm_2} R^k (R_j \langle \psi_{1_{\pm_1}}, \Pi_{\mp_2}(\alpha_k  \psi_{2_{\pm_2}}) \rangle - R_k \langle \psi_{1_{\pm_1}}, \Pi_{\mp_2}(\alpha_j  \psi_{2_{\pm_2}}) \rangle)\\
		& \hspace{1em} -  \sum_{\pm_1,\pm_2} R^k(R_j \langle \psi_{1_{\pm_1}}, R^k_{\pm_2}  \psi_{2_{\pm_2}} \rangle - R_k \langle \psi_{1_{\pm_1}}, R^j_{\pm_2}  \psi_{2_{\pm_2}} \rangle)\\
		&= I + II \, .
	\end{align*}
	Both terms are null forms.
	
	Concerning I we consider each term separately and remark that at this point $R_k$ and $R_j$ are irrelevant. We obtain
	\begin{align}
		\label{30a}
		&{\mathcal F}(\langle \Pi_{\pm_1} \psi_{1_{\pm_1}},\Pi_{\mp_2}\alpha_k \psi_{2_{\pm_2}}\rangle) (\tau_0,\xi_0) \\
		\nonumber
		&= \int_{\tau_1+\tau_2=\tau_0 \, , \, \xi_1 + \xi_2= \xi_0} \langle \Pi(\pm_1 \xi_1) \widehat{\psi_{1_{\pm_1}}} (\tau_1,\xi_1),\Pi(\mp_2 \xi_2)\alpha_k \Pi(\pm_2 \xi_2) \widehat{\psi_{2_{\pm_2}}}(\tau_2,\xi_2)\rangle d\tau_1 d \xi_1 \\ \nonumber
		&= \int_{\tau_1+\tau_2=\tau_0 \, , \, \xi_1 + \xi_2= \xi_0} \langle  \widehat{\psi_{1_{\pm_1}}} (\tau_1,\xi_1),\Pi(\pm_1 \xi_1)\Pi(\mp_2 \xi_2)\alpha_k \Pi(\pm_2 \xi_2) \widehat{\psi_{2_{\pm_2}}}(\tau_2,\xi_2)\rangle d\tau_1 d \xi_1  .
	\end{align}
	Now we use the estimate ("spinorial null structure")
	$$ | \Pi(\pm \xi_1) \Pi(\mp \xi_2) z| \lesssim |z| \angle (\pm \xi_1,\pm \xi_2) $$ 
	proven by \cite{AFS}, Lemma 2. This implies
	$$ I \lesssim B_{\pm_1,\pm_2} (\psi_{1_{\pm_1}}, \psi_{2_{\pm_2}}) \, , $$
	where $ B_{\pm_1,\pm_2}$ is defined by (\ref{2}).
	
	We have to prove
	\begin{equation}
		\label{3}
		\| B_{\pm_1,\pm_2} (\psi_{1_{\pm_1}}, \psi_{2_{\pm_2}}) \|_{X_{\pm_0}^{s-1,-\frac{1}{4}++}} \lesssim \|\psi_{1_{\pm_1}} \|_{X^{l,\half+}_{\pm_1}} \|\psi_{2_{\pm_2}} \|_{X^{l,\half+}_{\pm_2}} \, .
	\end{equation}
We apply Prop. \ref{Prop.1.2} with parameters $\sigma_0=1-s$ , $\sigma_1=\sigma_2=l$ , $\beta_0=\frac{1}{4}-$ , $\beta_1=\beta_2=\half+$ . This requires $2l-s > - \frac{1}{4}$ , $4l-s >0$ and $3l-2s > -1$ , which follows from our assumptions.

	Next we obtain
	$$ II \precsim  \sum_{\pm_1,\pm_2} (R_{j,\pm 0} \langle \psi_{1_{\pm_1}}, R^k_{\pm_2}  \psi_{2_{\pm_2}} \rangle - R_{k,\pm 0} \langle \psi_{1_{\pm_1}}, R^j_{\pm_2}  \psi_{2_{\pm_2}} \rangle) \, . $$
	By duality we have to prove
	\begin{align*}
		&\left|\int\left(\langle \psi_{1_{\pm_1}},R^k_{\pm_2} \psi_{2_{\pm_2}} \rangle R^j_{\pm_0} \psi_{0_{\pm_0}} - \langle \psi_{1_{\pm_1}},R^j_{\pm_2} \psi_{2_{\pm_2}} \rangle R^k_{\pm_0} \psi_{0_{\pm_0}} \right) dx dt \right| \\
		& \hspace{1em}
		\lesssim \|\psi_{1_{\pm_1}}\|_{X^{l,\half+}_{\pm_1}} \|\psi_{2_{\pm_2}}\|_{X^{l,\half+}_{\pm_2}} \|\psi_{0_{\pm_0}}\|_{X^{-s+1,\frac{3}{4}--}_{\pm_0}} \, .
	\end{align*}
	We remark that the left hand side possesses a $Q^{jk}$-type null form between $\psi_{2_{\pm_2}}$ and $\psi_{0_{\pm_0}}$ .	We have to prove
	\begin{equation}
		\label{4}
		\|B_{\pm_2,\pm_0}(\psi_{2_{\pm_2}},\psi_{0_{\pm_0}}) \|_{X^{-l,-\half-}_{\pm_1}} \lesssim  \|\psi_{2_{\pm_2}}\|_{X^{l,\half+}_{\pm_2}} \|\psi_{0_{\pm_0}}\|_{X^{-s+1,\frac{3}{4}--}_{\pm_0}} \, .
	\end{equation}                           
                         We apply Prop. \ref{Prop.1.2} with $\sigma_0=\sigma_1=l$ , $\sigma_2 = 1-s$, $\beta_0=\beta_1=\half+$ , $ \beta_2 = \frac{3}{4}-$   , which  requires $3l-2s >-1$ and $4l-s > 0$ as before.
  \end{proof}                                             

\begin{proof}[Proof of (\ref{8.1})]
Using (\ref{2.7}) we obtain
\begin{align*}
\Pi_{\pm_0} (A_{k,df}^{\pm_2} \alpha^k \psi) & = \sum_{\pm} \Pi_{\pm_0} (A_{k,df}^{\pm_2} \alpha^k \Pi_{\pm} \psi)	\\
& =  \sum_{\pm} \Pi_{\pm_0} (A_{k,df}^{\pm_2} \Pi_{\mp}(\alpha^k \Pi_{\pm} \psi)) - \sum_{\pm} \Pi_{\pm_0} (A_{k,df}^{\pm_2} R^k_{\pm}\psi_{\pm}) = I + II \, .
\end{align*}
Concerning I we have to prove by duality
\begin{align*}
&	| \int \int \langle \Pi_{\pm_0} (A_{k,df}^{\pm_2} \Pi_{\mp_1}(\alpha^k \psi_{\pm_1})),\psi_{0_{\pm_0}} \rangle dx \, dt |\\
& \lesssim \|A_{k,df}^{\pm_2}\|_{X^{s,\frac{3}{4}+}_{\pm_2}} \|\psi_{1_{\pm_1}}\|_{X^{l,\half+}_{\pm_1}} \|\Pi_{\pm 0} \psi_{0_{\pm_0}}\|_{X^{-l,\half-}_{\pm_0}} \, .
\end{align*}                           
The left  hand side equals
\begin{align*}
		&| \int \int   (A_{k,df}^{\pm_2} \langle \Pi_{\mp_1}(\alpha^k \psi_{\pm_1})),\Pi_{\pm_0} \psi_{0_{\pm_0}} \rangle dx \, dt | \\
		&=  | \int \int   (A_{k,df}^{\pm_2} \langle \Pi_{\pm_0} \Pi_{\mp_1}(\alpha^k \psi_{\pm_1})),\psi_{0_{\pm_0}} \rangle dx \, dt | \,.                  
 \end{align*}                          
It contains a spinorial null form between    $\psi_{\pm_1}$ and $\psi_{0_{\pm_0}}$  as in (\ref{30a}), so that it remains to prove
\begin{equation}
	\label{71}
	\|B_{\pm_1,\pm_0}(\psi_{\pm_1},\psi_{0_{\pm_0}})\|_{X^{-s,-\frac{3}{4}-}_{\pm_2}} \lesssim \|\psi_{\pm_1}\|_{X^{l,\half+}_{\pm_1}} \|\psi_{0_{\pm_0}}\|_{X_{\pm_0}^{-l,\half--}} \, .
\end{equation}                     
We apply Prop. \ref{Prop.1.2} with parameters $\sigma_0 = s$ , $\sigma_1=l$ , $\sigma_2 = -l$ , $\beta_0=\frac{3}{4}+$,  $\beta_1= \half+$, $\beta_2= \half--$ .  We need $2s-l > 1$ and $s >\frac{3}{4}$ .

Concerning II we remark that
\begin{align*}
A^{df} &= |\nabla|^{-2} \nabla \times (\nabla \times A) =   |\nabla|^{-2} \nabla \times (\nabla \times A^{df}) + |\nabla|^{-2} \nabla \times (\nabla \times A^{cf}) \\ &=|\nabla|^{-2} \nabla \times (\nabla \times A^{df}) \, .                         
\end{align*}                           
This implies   
$$A_{l}^{df} R_{\pm_1} \psi_{\pm_1} = \epsilon^{lkm} \partial_k w_m R^l_{\pm_1} \psi_{\pm_1}  = (\nabla w_m \times \frac{\nabla}{|\nabla|} \psi_{\pm_1})^m \, , $$  
where $\epsilon^{lkm}$ denotes the Levi-Civita symbol with $\epsilon^{123} = 1$ and $w= |\nabla|^{-2} \nabla \times A^{df}$  , so that $\partial_j w_m = |\nabla|^{-2} \partial_j \partial_k A^{df}_l \epsilon^{lkm}$ .    This is a $Q_{ij}$-type null form between $w_m$ and $ |\nabla|^{-1} \psi_{\pm_1}$ , so that we have to prove
$$\|B_{\pm_1,\pm_2}(A^{df,\pm_2}_l ,\psi_{\pm_1})\|_{X^{l,-\half++}_{\pm}} \lesssim    \|A^{df,\pm_2}_l \|_{X^{s,\frac{3}{4}+}_{\pm_2}} \|\psi_{\pm_1}\|_{X^{l,\half+}_{\pm_1}} \, . $$  
This is implied by   Prop. \ref{Prop.1.2} with parameters $\sigma_0 = -l$ , $\sigma_1=l$ , $\sigma_2 = s$ , $\beta_0=\half --$, $\beta_1= \half+$ , $\beta_2= \frac{3}{4}+$ , if $2s-l > 1$ and $s >\frac{3}{4}$ .
\end{proof}    
                                    
The estimates (\ref{16})-(\ref{18}) have been essentially given by Tao \cite{T1}. For the sake of completeness we give the details. We remark that it is especially (\ref{18}) which prevents a large data result, because it seems to be difficult to replace $X^{s+\frac{1}{4},-\frac{1}{2}+\epsilon}_{\tau=0}$ by $X^{s+\frac{1}{4},-\frac{1}{2}+\epsilon+}_{\tau=0}$ on the left hand side.\\
\begin{proof}[Proof of (\ref{17})]
As usual the singularity of $|\nabla|^{-1}$ is harmless in dimension 3 (\cite{T}, Cor. 8.2) and it can be replaced by $\langle \nabla \rangle^{-1}$. Taking care of the time derivative we reduce to
\begin{align*}
	\big|\int \int u_1 u_2 u_3 dx dt\big| \lesssim \|u_1\|_{X^{s+\frac{1}{4},\frac{1}{2}+\epsilon}_{\tau =0}}
	\|u_2\|_{X^{s+\frac{1}{4},-\frac{1}{2}+\epsilon}_{\tau =0}}
	\|u_3\|_{X^{\frac{3}{4} -s,\frac{1}{2}-2\epsilon+}_{\tau =0}} \, ,
\end{align*}
which follows from Sobolev's multiplication rule, because  $s>\frac{1}{4}$ . 
\end{proof}
\begin{proof}[Proof of (\ref{18})]
a. If $\widehat{\phi}$ is supported in  $ ||\tau|-|\xi|| \gtrsim |\xi| $ , we obtain
$$ \|\phi\|_{X^{s+\frac{1}{4},\frac{1}{2}+\epsilon}_{\tau=0}} \lesssim \|\phi\|_{X^{s,\frac{3}{4}+\epsilon}_{|\tau|=|\xi|}} \,. $$
Thus (\ref{18}) follows from (\ref{17}).\\
b. It remains to show
$$ \big|\int\int (uv_t w + uvw_t) dxdt \big| \lesssim 
\|u\|_{X^{\frac{3}{4}-s,\frac{1}{2}-\epsilon}_{\tau =0}}
\|w\|_{X^{s,\frac{3}{4}+\epsilon}_{|\tau| =|\xi|}}
\|v\|_{X^{s+\frac{1}{4}-\epsilon,\frac{1}{2}+\epsilon}_{\tau =0}} \, $$
whenever $\widehat w$ is supported in $||\tau|-|\xi|| \ll |\xi|$.
This is equivalent to
$$ \int_* m(\xi_1,\xi_2,\xi_3,\tau_1,\tau_2,\tau_3) \prod_{i=1}^3 \widehat{u}_i(\xi_i,\tau_i) d\xi d\tau \lesssim \prod_{i=1}^3 \|u_i\|_{L^2_{xt}} \, $$
where $d\xi = d\xi_1 d\xi_2 d\xi_3$ , $d\tau = d\tau_1 d\tau_2 d\tau_n$ and * denotes integration over $\sum_{i=1}^3 \xi_i = \sum_{i=1}^3 \tau_i = 0$. The Fourier transforms are nonnegative without loss of generality. Here
$$ m= \frac{(|\tau_2|+|\tau_3|) \chi_{||\tau_3|-|\xi_3|| \ll |\xi_3|}}{\langle \xi_1 \rangle^{\frac{3}{4}-s} \langle \tau_1 \rangle^{\frac{1}{2}-\epsilon} \langle \xi_2 \rangle^{s+\frac{1}{4}-\epsilon} \langle \tau_2 \rangle^{\frac{1}{2}+\epsilon} \langle \xi_3 \rangle^s \langle |\tau_3|-|\xi_3|\rangle^{\frac{3}{4}+\epsilon}} \, . $$
Since $\langle \tau_3 \rangle \sim \langle \xi_3 \rangle$ and $\tau_1+\tau_2+\tau_3=0$ we have 
\begin{equation}
	\label{N4'}
	|\tau_2| + |\tau_3| \lesssim \langle \tau_1 \rangle^{\frac{1}{2}-\epsilon} \langle \tau_2 \rangle^{\frac{1}{2}+\epsilon} +\langle \tau_1 \rangle^{\frac{1}{2}-\epsilon} \langle \xi_3 \rangle^{\frac{1}{2}+\epsilon} +\langle \tau_2 \rangle^{\frac{1}{2}+\epsilon} \langle \xi_3 \rangle^{\frac{1}{2}-\epsilon} , 
\end{equation}
so that concerning the first term on the right hand side of (\ref{N4'}) we have to show
$$\big|\int\int uvw dx dt\big| \lesssim \|u\|_{X^{\frac{3}{4}-s,0}_{\tau=0}} \|v\|_{X^{s+\frac{1}{4}-\epsilon,0}_{\tau=0}} \|w\|_{X^{s,\frac{3}{4}+\epsilon}_{|\tau|=|\xi|}} \ , $$
which follows from Sobolev's multiplication rule, because $s> \half$. This is sharp with respect to the time derivative. As a consequence we need the smallness assumption on the data for local existence.\\
Concerning the second term on the right hand side of (\ref{N4'}) we use $\langle \xi_1 \rangle^{s-\frac{3}{4}} \lesssim \langle \xi_2 \rangle^{s-\frac{3}{4}} + \langle \xi_3 \rangle^{s-\frac{3}{4}}$, so that we reduce to
\begin{equation}
	\label{51}
	\big|\int\int uvw dx dt\big|  
	\lesssim\|u\|_{X^{0,0}_{\tau=0}} \|v\|_{X^{1-\epsilon,\frac{1}{2}+\epsilon}_{\tau=0}} \|w\|_{X^{s-\frac{1}{2}-\epsilon,\frac{3}{4}+\epsilon}_{|\tau|=|\xi|}} 
\end{equation}
and 
\begin{equation}
	\label{52}
	\big|\int\int uvw dx dt\big|  
	\lesssim\|u\|_{X^{0,0}_{\tau=0}}
	\|v\|_{X^{s+\frac{1}{4}-\epsilon,\frac{1}{2}+\epsilon}_{\tau=0}} \|w\|_{X^{\frac{1}{4}-\epsilon,\frac{3}{4}+\epsilon}_{|\tau|=|\xi|}} \,.
\end{equation}
(\ref{51}) is implied by Sobolev and (\ref{Tao}) as follows:
$$ \big| \int\int uvw dx dt \big| \le \|u\|_{L^2_xL^2_t} \|v\|_{L^4_x L^{\infty}_t} \|w\|_{L^4_xL^2_t}
\lesssim \|u\|_{X^{0,0}_{\tau =0}} \|v\|_{X^{1-\epsilon,\frac{1}{2}+}_{\tau = 0}} \|w\|_{X^{\frac{1}{4},\frac{1}{2}+}_{|\tau|=|\xi|}} $$
For (\ref{52}) we obtain 
\begin{align*}
 \big| \int\int uvw dx dt \big| & \le \|u\|_{L^2_x L^2_t} \|v\|_{L^q_x L^{\infty}_t} \|w\|_{L^p_x L^2_t}	\,
\end{align*}
where $\frac{1}{q}=\frac{1}{4}-\epsilon$ and $\frac{1}{p} =\frac{1}{4}+\epsilon$ .  For $s >\frac{3}{4}$ we obtain by Sobolev  $$\|v\|_{L^q_x L^{\infty}_t} \lesssim\|v\|_{X^{s+\frac{1}{4}-\epsilon,\frac{1}{2}+\epsilon}_{\tau=0}} \, .$$ 
Interpolation between (\ref{Tao}) $\|w\|_{L^4_x L^2_t} \lesssim \|w\|_{X_{|\tau|=|\xi|}^{\frac{1}{4},\half+}}$ and the trivial identity $\|w\|_{L^2_x L^2_t} = \|w\|_{X^{0,0}_{|\tau|=|\xi|}}$ implies
$$\|w\|_{L^p_x L^2_t} \lesssim \|w\|_{X^{\frac{1}{4}-\epsilon,\half+}_{|\tau|=|\xi|}} \, $$
so that (\ref{52}) follows. 

Concerning the last term on the right hand side of (\ref{N4'}) we use
$\langle \xi_1 \rangle^{s-\frac{3}{4}} \lesssim \langle \xi_2 \rangle^{s-\frac{3}{4}} + \langle \xi_3 \rangle^{s-\frac{3}{4}}$ so that we reduce to
\begin{equation}
	\label{53}
	\big|\int\int uvw dx dt\big| 
	\lesssim \|u\|_{X^{0,\frac{1}{2}-\epsilon}_{\tau=0}} 
	\|v\|_{X^{1-\epsilon,0}_{\tau=0}}
	\|w\|_{X^{s-\frac{1}{2}+\epsilon,\frac{3}{4}+\epsilon}_{|\tau|=|\xi|}} 
\end{equation}
and
\begin{equation}
	\label{54}
	\big|\int\int uvw dx dt\big| 
	\lesssim \|u\|_{X^{0,\frac{1}{2}-\epsilon}_{\tau=0}} 
	\|v\|_{X^{s+\frac{1}{4}-\epsilon,0}_{\tau=0}} \|w\|_{X^{\frac{1}{4}+\epsilon,\frac{3}{4}+\epsilon}_{|\tau|=|\xi|}}  \,.
\end{equation}
We estimate as follows:
\begin{align}
	\nonumber
	\big| \int\int uvw dx dt \big| &\lesssim \|u\|_{L^2_x L^{\frac{1}{\epsilon}-}_t} \|v\|_{L^{p}_x L^2_t} \|w\|_{L^{r}_x L^{q}_t} \\
	\label{54'}
	&\lesssim \|u\|_{X^{0,\frac{1}{2}-\epsilon}_{\tau=0}} 
	\|v\|_{X^{1-\epsilon,0}_{\tau=0}}
	\|w\|_{X^{\frac{1}{6}+\epsilon+,\frac{1}{2}+}_{|\tau|=|\xi|}} \, ,
\end{align}
which would be sufficient for (\ref{53}) and (\ref{54}) under our assumption $ s > \frac{3}{4} $ . For the proof of (\ref{54'}) we choose $\frac{1}{q}=\half-\epsilon+$ , $\frac{1}{p}=\frac{1}{6}+\frac{\epsilon}{3}$ and $\frac{1}{r}=\frac{1}{3}-\frac{\epsilon}{3}$  so that  $\|u\|_{L^2_x L^{\frac{1}{\epsilon}-}_t} \lesssim \|u\|_{X^{0,\half-\epsilon}}$ and  by Sobolev  $\|v\|_{L^{p}_x L^2_t} \lesssim \|v\|_{X^{1-\epsilon,0}_{\tau=0}}$ . Moreover  interpolation between (\ref{Tao}) $\|w\|_{L^4_x L^2_t} \lesssim \|w\|_{X^{\frac{1}{4},\half+}_{|\tau|=|\xi|}}$ and the trivial identity $\|w\|_{L^2_x L^2_t} = \|w\|_{X^{0,0}_{|\tau|=|\xi|}}$ implies 
$$\|w\|_{L^r_x L^2_t} \lesssim \|w\|_{X^{\half-\frac{1}{r},\half+}_{|\tau|=|\xi|}} \, , $$ 
Interpolation between Strichartz' inequality (\ref{15}) $\|w\|_{L^4_{xt}} \lesssim \|w\|_{X^{\half,\half+}_{|\tau|=|\xi|}}$ and the trivial identity gives 
$$\|w\|_{L^r_x L^r_t} \lesssim \|w\|_{X^{1-\frac{2}{r},\half+}_{|\tau|=|\xi|}} \,, $$
so that another interpolation between these estimates implies the following bound for the last factor
$$\|w\|_{L^r_x L^q_t} \lesssim \|w\|_{X^{\frac{1}{6}+\epsilon+,\half+}_{|\tau|=|\xi|}} \, , $$
as one easily checks. This implies (\ref{54'}).
\end{proof}
 
\begin{proof}[Proof of (\ref{16})]
If $\widehat{\phi}$ is supported in $||\tau|-|\xi|| \gtrsim |\xi|$ we obtain
$$\|\phi\|_{X^{s+\frac{1}{4},\frac{1}{2}+\epsilon}_{\tau =0}}
\lesssim \|\phi\|_{X^{s,\frac{3}{4}+\epsilon}_{|\tau|=|\xi|}} \, . $$
which implies that (\ref{16}) follows from (\ref{18}), if $\widehat{\phi}_1$ or $\widehat{\phi}_2$ have this support property. So we may assume that both functions are supported in $||\tau|-|\xi|| \ll |\xi|$. This means that it suffices to show
$$ \int_* m(\xi_1,\xi_2,\xi_3,\tau_1,\tau_2,\tau_3) \prod_{i=1}^3 \widehat{u}_i(\xi_i,\tau_i) d\xi d\tau \lesssim \prod_{i=1}^3 \|u_i\|_{L^2_{xt}} \, , $$
where
$$m= \frac{|\tau_3|\chi_{||\tau_2|-|\xi_2|| \ll |\xi_2|} \chi_{||\tau_3|-|\xi_3|| \ll |\xi_3|}}{\langle \xi_1 \rangle^{\frac{3}{4}-s} \langle \tau_1 \rangle^{\frac{1}{2}-\epsilon-} \langle \xi_2 \rangle^s \langle |\tau_2|-|\xi_2| \rangle^{\frac{3}{4}+\epsilon} \langle \xi_3 \rangle^s \langle |\tau_3|-|\xi_3|\rangle^{\frac{3}{4}+\epsilon}} \, . $$
Since $\langle \tau_3 \rangle \sim \langle \xi_3 \rangle$ , $\langle \tau_2 \rangle \sim \langle \xi_2 \rangle$ and $\tau_1+\tau_2+\tau_3=0$ we have 
\begin{equation}
	|\tau_3| \lesssim \langle \tau_1 \rangle^{\frac{1}{2}-\epsilon-} \langle \xi_3 \rangle^{\frac{1}{2}+\epsilon+} +\langle \xi_2 \rangle^{\frac{1}{2}-\epsilon-} \langle \xi_3 \rangle^{\frac{1}{2}+\epsilon+} , 
\end{equation}
Concerning the first term on the right hand side we have to show 
$$\big|\int \int uvw dx dt\big| \lesssim \|u\|_{X^{\frac{3}{4}-s,0}_{\tau=0}} \|v\|_{X^{s,\frac{3}{4}+\epsilon}_{|\tau|=|\xi|}} \|w\|_{X^{s-\frac{1}{2}-\epsilon-,\frac{3}{4}+\epsilon}_{|\tau|=|\xi|}} \, .$$
We use Prop. \ref{Prop.1.2} , which shows
$$\|vw\|_{L^2_t H^{s-\frac{3}{4}}_x} \lesssim \|v\|_{X^{s,\frac{1}{2}+}_{|\tau|=|\xi|}} \|w\|_{X^{s-\frac{1}{2}-\epsilon-,\frac{1}{2}+}_{|\tau|=|\xi|}} $$
under the assumption $s > \frac{3}{4}$. \\
Concerning the second term on the right hand side we use $\langle \xi_1 \rangle^{s-\frac{3}{4}} \lesssim \langle \xi_2 \rangle^{s-\frac{3}{4}} + \langle \xi_3 \rangle^{s-\frac{3}{4}}$ , so that we reduce to
$$
\big|\int\int uvw dx dt\big| 
\lesssim \|u\|_{X^{0,\frac{1}{2}-\epsilon-}_{\tau=0}} 
\|v\|_{X^{\frac{1}{4}+\epsilon+,\frac{3}{4}+\epsilon}_{|\tau|=|\xi|}}
\|w\|_{X^{s-\frac{1}{2}-\epsilon-,\frac{3}{4}+\epsilon}_{|\tau|=|\xi|}} 
$$
and
$$
\big|\int\int uvw dx dt\big| 
\lesssim \|u\|_{X^{0,\frac{1}{2}-\epsilon-}_{\tau=0}}
\|v\|_{X^{s-\frac{1}{2}+\epsilon+,\frac{3}{4}+\epsilon}_{|\tau|=|\xi|}}  \|w\|_{X^{\frac{1}{4}-\epsilon-,\frac{3}{4}+\epsilon}_{|\tau|=|\xi|}} \, .
$$
We obtain
\begin{align*}
\big|\int\int uvw dx dt\big| & \lesssim \|u\|_{L^2_x L^{\frac{1}{2\epsilon}-}_t} \|v\|_{L^{r+}_x L^{q+}_t} \|w\|_{L^{p-}_x L^2_t} \\
&
\lesssim \|u\|_{X^{0,\half-2\epsilon+}_{\tau=0}}
\|v\|_{X^{\frac{1}{4}+5\epsilon+,\half+}_{|\tau|=|\xi|}}  \|w\|_{X^{\frac{1}{4}-\epsilon-,\half+}_{|\tau|=|\xi|}} \, ,
\end{align*}
which implies both estimates for $s>\frac{3}{4}$ and $\epsilon>0$ sufficiently small.. Here we choose $\frac{1}{r}= \frac{1}{4}-\epsilon$ , $\frac{1}{q}= \half-2\epsilon$ and $\frac{1}{p}=\frac{1}{4}+\epsilon$ .  Here for the first factor we interpolated between $\|u\|_{L^2_x L^{\infty}_2} \lesssim \|u\|_{X^{0,\half+}_{|\tau|=|\xi|}}$ and the trivial identity $\|u\|_{L^2_{xt}} = \|u\|_{X^{0,0}_{|\tau|=|\xi|}}$, for the second factor between (\ref{Tao}) $\|v\|_{L^4_x L^2_t} \lesssim \|v\|_{X^{\frac{1}{4},\half+}_{|\tau|=|\xi|}}$ and the Sobolev type inequality $\|v\|_{L^{\infty}_{xt}} \lesssim \|v\|_{X^{\frac{3}{2}+,\half+}_{|\tau|=|\xi|}}$ , and for the last factor between (\ref{Tao}) and the trivial identity, both  with interpolation parameter $\theta = 1-4\epsilon-$ .
\end{proof}

\begin{proof}[Proof of (\ref{17a})]
By the fractional Leibniz rule we have to prove
$$\|uv\|_{X^{0,-\half+\epsilon+}_{\tau=0}} \lesssim \|u\|_{X^{l-s+\frac{3}{4},\half+}_{|\tau|=|\xi|}}
 \|v\|_{X^{l,\half+}_{|\tau|=|\xi|}} \, , $$
 which is equivalent to
 $$| \int \int uvw \, dx\, dt| \lesssim \|u\|_{X^{l-s+\frac{3}{4},\half+}_{|\tau|=|\xi|}}
 \|v\|_{X^{l,\half+}_{|\tau|=|\xi|}} \|w\|_{X^{0,\half-\epsilon-}_{\tau=0}} \, . $$
By H\"older's inequality we obtain:
 $$| \int \int uvw \, dx\, dt| \lesssim \|u\|_{L^4_x L^2_t}
\|v\|_{L^4_x L^{\frac{2}{1-2\epsilon}+}_t} \|w\|_{L^2_x L^{\frac{1}{\epsilon}-}_t} \, . $$
The first factor is estimated by (\ref{Tao}) using the assumption $l-s\ge -\half$ and the last factor by Sobolev. For the second factor we interpolate between (\ref{Tao}) $\|v\|_{L^4_x L^2_t} \lesssim \|v\|_{X^{\frac{1}{4},\half+}_{|\tau|=|\xi|}}$ and Strichartz' estimate $\|v\|_{L^4_x L^4_t} \lesssim \|v\|_{X^{\half,\half+}_{|\tau|=|\xi|}}$ , which implies $\|v\|_{L^4_x L^{\frac{2}{1-2\epsilon}+}_t} \lesssim \|v\|_{X^{\frac{1}{4}+\epsilon+,\half+}_{|\tau|=|\xi|}} \lesssim \|v\|_{X^{l,\half+}_{|\tau|=|\xi|}}$ for $l > \frac{1}{4}$ ,
 so that (\ref{17a}) is proven. 
\end{proof}
\begin{proof}[Proof of (\ref{19})] Sobolev's multiplication law shows the estimate
$$ \| |\nabla|^{-1} (A_1 \partial_t A_2)\|_{C^0(H^{s-1})} \lesssim \|A_1\|_{C^0(H^s)} \|\partial_t A_2\|_{C^0(H^{s-1})}$$
for $s > \half$. Use now $$ A=A^{cf} + \sum_{\pm} A^{df}_{\pm} \quad , \quad \partial_t A = \partial_t A^{cf} + i \langle \nabla \rangle(A_+^{df} -A_-^{df}) \, $$
from which the estimate (\ref{19}) easily follows. 
\end{proof}
\begin{proof}[Proof of (\ref{19a})] 
This reduces to the estimate
$$\| \psi_1 \psi_2\|_{C^0(H^{s-2})} \lesssim \|\psi_1\|_{C^0(H^l)} \|\psi_s\|_{C^0(H^l)} \, , $$
which by the Sobolev multiplication law requires $2-s+2l > \frac{3}{2}$ . This is implied by our assumption $l-s \ge - \half$ and $l>\frac{1}{4}$ . 
\end{proof}

\begin{proof}[Proof of (\ref{29})]
	This a variant of a proof given by  Tao (\cite{T1}) for the Yang-Mills case.
	  We have to show
$$
\int_* m(\xi,\tau) \prod_{i=1}^3 \widehat{u}_i(\xi_i,\tau_i)  d\xi d\tau \lesssim \prod_{i=1}^3 \|u_i\|_{L^2_{xt}} \, , 
$$
where $\xi=(\xi_1,\xi_2,\xi_3) \, , \,\tau=(\tau_1,\tau_2,\tau_3)$ , * denotes integration over $ \sum_{i=1}^3 \xi_i = \sum_{i=1}^3 \tau_i = 0$ , and 
$$ m = \frac{(|\xi_2|+|\xi_3|) \langle \xi_1 \rangle^{s-1} \langle |\tau_1|-|\xi_1|) \rangle^{-\frac{1}{4}+2\epsilon}}{\langle \xi_2 \rangle^s \langle |\tau_2| - |\xi_2|\rangle^{\frac{3}{4}+\epsilon}  \langle \xi_3 \rangle^{s+\frac{1}{4}}\langle \tau_3 \rangle^{\frac{1}{2}+\epsilon}} \, .$$
Case 1: $|\xi_2| \le |\xi_1|$ ($\Rightarrow$ $|\xi_2|+|\xi_3| \lesssim |\xi_1|$). \\
By two applications of the averaging principle (\cite{T}, Prop. 5.1) we may replace $m$ by
$$ m' = \frac{ \langle \xi_1 \rangle^s \chi_{||\tau_2|-|\xi_2||\sim 1} \chi_{|\tau_3| \sim 1}}{ \langle \xi_2 \rangle^s \langle \xi_3 \rangle^{s+\frac{1}{4}}} \, . $$
Let now $\tau_2$ be restricted to the region $\tau_2 =T + O(1)$ for some integer $T$. Then $\tau_1$ is restricted to $\tau_1 = -T + O(1)$, because $\tau_1 + \tau_2 + \tau_3 =0$, and $\xi_2$ is restricted to $|\xi_2| = |T| + O(1)$. The $\tau_1$-regions are essentially disjoint for $T \in {\mathbb Z}$ and similarly the $\tau_2$-regions. Thus by Schur's test (\cite{T}, Lemma 3.11) we only have to show
\begin{align*}
	&\sup_{T \in {\mathbb Z}} \int_* \frac{\langle \xi_1 \rangle^s \chi_{\tau_1=-T+O(1)} \chi_{\tau_2=T+O(1)} \chi_{|\tau_3|\sim 1} \chi_{|\xi_2|=|T|+O(1)}}{\langle \xi_2 \rangle^s \langle \xi_3 \rangle^{s+\frac{1}{4}}} \prod_{i=1} \widehat{u}_i(\xi_i,\tau_i)  d\xi d\tau  \\
	& \hspace{25em} \lesssim \prod_{i=1}^3 \|u_i\|_{L^2_{xt}} \, . 
\end{align*}
The $\tau$-behaviour of the integral is now trivial, thus we reduce to
\begin{equation}
	\label{55}
	\sup_{T \in {\mathbb N}} \int_{\sum_{i=1}^3 \xi_i =0}  \frac{ \langle \xi_1 \rangle^s \chi_{|\xi_2|=|T|+O(1)}}{ \langle T \rangle^s \langle \xi_3 \rangle^{s+\frac{1}{4}}} \widehat{f}_1(\xi_1)\widehat{f}_2(\xi_2)\widehat{f}_3(\xi_3)d\xi \lesssim \prod_{i=1}^3 \|f_i\|_{L^2_x} \, .
\end{equation}
Assuming now $|\xi_3| \le |\xi_1|$ (the other case being simpler) 
it only remains to consider the following two cases: \\
Case 1.1: $|\xi_1| \sim |\xi_3| \gtrsim T$. We obtain in this case
\begin{align*}
	L.H.S. \, of \, (\ref{55}) 
	&\lesssim \sup_{T \in{\mathbb N}} \frac{1}{T^{s+\frac{1}{4}}} \|f_1\|_{L^2} \|f_3\|_{L^2} \| {\mathcal F}^{-1}(\chi_{|\xi|=T+O(1)} \widehat{f}_2)\|_{L^{\infty}({\mathbb R}^3)} \\
	&\lesssim \sup_{T \in{\mathbb N}} \frac{1}{ T^{s+\frac{1}{4}}} 
	\|f_1\|_{L^2} \|f_3\|_{L^2} \| \chi_{|\xi|=T+O(1)} \widehat{f}_2\|_{L^1({\mathbb R}^3)} \\
	&\lesssim \hspace{-0.1em}\sup_{T \in {\mathbb N}} \frac{T}{T^{s+\frac{1}{4}}}  \prod_{i=1}^3 \|f_i\|_{L^2} \lesssim\hspace{-0.1em}
	\prod_{i=1}^3 \|f_i\|_{L^2} \, ,
\end{align*}
provided $s \ge \frac{3}{4}$ .\\
Case 1.2: $|\xi_1| \sim T \gtrsim |\xi_3|$. 
An elementary calculation shows that
\begin{align*}
	L.H.S. \, of \,  (\ref{55})
	\lesssim \sup_{T \in{\mathbb N}} \| \chi_{|\xi|=T+O(1)} \ast \langle \xi \rangle^{-2(s+\frac{1}{4})}\|^{\frac{1}{2}}_{L^{\infty}(\mathbb{R}^2)} \prod_{i=1}^3 \|f_i\|_{L^2_x} \lesssim \prod_{i=1}^3 \|f_i\|_{L^2_x} 
\end{align*}
for $s > \frac{3}{4}$ ,
so that the desired estimate follows.\\
Case 2. $|\xi_1| \le |\xi_2|$ ($\Rightarrow$ $|\xi_2|+|\xi_3| \lesssim |\xi_2|$). \\
Exactly as in case 1 we reduce to
$$
\sup_{T \in {\mathbb N}} \int_{\sum_{i=1}^3 \xi_i =0}  \frac{ \langle \xi_1 \rangle^{s-1} \chi_{|\xi_2|=|T|+O(1)}}{ \langle T \rangle^{s-1} \langle \xi_3 \rangle^{s+\frac{1}{4}}} \widehat{f}_1(\xi_1)\widehat{f}_2(\xi_2)\widehat{f}_2(\xi_3)d\xi \lesssim \prod_{i=1}^3 \|f_i\|_{L^2_x} \, .
$$
This can be treated as in case 1.
\end{proof}

\begin{proof}[Proof of (\ref{30})]
	By the Sobolev multiplication law we obtain
	$$ |\int \int fgh \, dx\, dt| \lesssim \|f\|_{X^{s+\frac{1}{4},\half+\epsilon}_{\tau=0}} \|g\|_{X^{s-\frac{3}{4},\half+\epsilon}_{\tau=0}}  \|h\|_{X^{\frac{5}{4}-s-2\epsilon,-\half}_{\tau=0}} $$
	for $s > \frac{3}{4}$ . The elementary estimate $\langle \xi \rangle^{\frac{1}{4}-2\epsilon} \langle \tau \rangle^{-(\frac{1}{4}-2\epsilon)} \lesssim  \langle |\tau|-|\xi| \rangle^{\frac{1}{4}-2\epsilon}$ implies
	$$  \|h\|_{X^{\frac{5}{4}-s-2\epsilon,-\half}_{\tau=0}} \lesssim \|h\|_{X^{1-s,\frac{1}{4}-2\epsilon}_{|\tau|=|\xi|}} \,, $$
	thus the claimed estimate.
\end{proof}

\begin{proof}[Proof of (\ref{31})]
	We may assume $\frac{3}{4}<s\le 1$ , because the case $s>1$ follows easily by the fractional Leibniz rule.
	We obtain $$\|A\|_{L^{4-}_t L^2_x} \lesssim \|A\|_{X^{0,\frac{1}{4}-}_{|\tau|=|\xi|}} \lesssim \|A\|_{X^{1-s,\frac{1}{4}-}_{|\tau|=|\xi|}}\, $$
	so that by duality
	$$	\|A_1 A_2 A_3\|_{X^{s-1,-\frac{1}{4}+}_{|\tau|=|\xi|}} \lesssim \|A_1 A_2 A_3\|_{L^{\frac{4}{3}+}_t L^2_x} \lesssim \prod_{i=1}^3 \|A_i\|_{L^{4+}_t L^6_x} \, . $$
	Now by Sobolev 
	$$ \|A_i\|_{L^{4+}_t L^6_x} \lesssim \|A_i\|_{L^{4+}_t H^1_x} \lesssim \|A_i\|_{X^{s+\frac{1}{4},\half+}_{\tau=0}} $$
	and by Strichartz  and Sobolev as well
	$$\|A_i\|_{L^{4+}_t L^6_x} \lesssim \|A_i\|_{L^{4+}_t H^{\frac{1}{4},4+}_x} \lesssim \|A_i\|_{X^{\frac{3}{4}+,\half+}_{|\tau|=|\xi|}} \lesssim \|A_i\|_{X^{s,\half+}_{|\tau|=|\xi|}} \, ,$$
	which implies the claim.

\end{proof}

\begin{proof}[Proof of (\ref{43})]
	We apply again  Tao's method (\cite{T1}). We have to show
$$
\int_* m(\xi,\tau) \prod_{i=1}^3 \widehat{u}_i(\xi_i,\tau_i)  d\xi d\tau \lesssim \prod_{i=1}^3 \|u_i\|_{L^2_{xt}} \, , 
$$
where 
$$ m = \frac{ \langle \xi_1 \rangle^{l} \langle |\tau_1|-|\xi_1|) \rangle^{-\frac{1}{2}++}}{\langle \xi_2 \rangle^l \langle |\tau_2| - |\xi_2|\rangle^{\half+}  \langle \xi_3 \rangle^{s+\frac{1}{4}}\langle \tau_3 \rangle^{\frac{1}{2}+}} \, .$$
By two applications of the averaging principle (\cite{T}, Prop. 5.1) we may replace $m$ by
$$ m' = \frac{ \langle \xi_1 \rangle^l \chi_{||\tau_2|-|\xi_2||\sim 1} \chi_{|\tau_3| \sim 1}}{ \langle \xi_2 \rangle^l \langle \xi_3 \rangle^{s+\frac{1}{4}}} \, . $$
Let now $\tau_2$ be restricted to the region $\tau_2 =T + O(1)$ for some integer $T$. Then $\tau_1$ is restricted to $\tau_1 = -T + O(1)$, because $\tau_1 + \tau_2 + \tau_3 =0$, and $\xi_2$ is restricted to $|\xi_2| = |T| + O(1)$. The $\tau_1$-regions are essentially disjoint for $T \in {\mathbb Z}$ and similarly the $\tau_2$-regions. Thus by Schur's test (\cite{T}, Lemma 3.11) we only have to show
\begin{align*}
	&\sup_{T \in {\mathbb Z}} \int_* \frac{\langle \xi_1 \rangle^l \chi_{\tau_1=-T+O(1)} \chi_{\tau_2=T+O(1)} \chi_{|\tau_3|\sim 1} \chi_{|\xi_2|=|T|+O(1)}}{\langle \xi_2 \rangle^l \langle \xi_3 \rangle^{s+\frac{1}{4}}} \prod_{i=1} \widehat{u}_i(\xi_i,\tau_i)  d\xi d\tau  \\
	& \hspace{25em} \lesssim \prod_{i=1}^3 \|u_i\|_{L^2_{xt}} \, . 
\end{align*}
The $\tau$-behaviour of the integral is now trivial, thus we reduce to
\begin{equation}
	\label{55'}
	\sup_{T \in {\mathbb N}} \int_{\sum_{i=1}^3 \xi_i =0}  \frac{ \langle \xi_1 \rangle^l \chi_{|\xi_2|=|T|+O(1)}}{ \langle T \rangle^l \langle \xi_3 \rangle^{s+\frac{1}{4}}} \widehat{f}_1(\xi_1)\widehat{f}_2(\xi_2)\widehat{f}_3(\xi_3)d\xi \lesssim \prod_{i=1}^3 \|f_i\|_{L^2_x} \, .
\end{equation}
Assuming now $|\xi_3| \le |\xi_1|$ (the other case being simpler) 
it only remains to consider the following two cases: \\
Case 1.1: $|\xi_1| \sim |\xi_3| \gtrsim T$. We obtain in this case
\begin{align*}
	L.H.S. \, of \, (\ref{55'}) 
	&\lesssim \sup_{T \in{\mathbb N}} \frac{1}{T^{s+\frac{1}{4}}} \|f_1\|_{L^2} \|f_3\|_{L^2} \| {\mathcal F}^{-1}(\chi_{|\xi|=T+O(1)} \widehat{f}_2)\|_{L^{\infty}({\mathbb R}^3)} \\
	&\lesssim \sup_{T \in{\mathbb N}} \frac{1}{ T^{s+\frac{1}{4}}} 
	\|f_1\|_{L^2} \|f_3\|_{L^2} \| \chi_{|\xi|=T+O(1)} \widehat{f}_2\|_{L^1({\mathbb R}^3)} \\
	&\lesssim \hspace{-0.1em}\sup_{T \in {\mathbb N}} \frac{T}{T^{s+\frac{1}{4}}}  \prod_{i=1}^3 \|f_i\|_{L^2} \lesssim\hspace{-0.1em}
	\prod_{i=1}^3 \|f_i\|_{L^2} \, ,
\end{align*}
because $s \ge \frac{3}{4}$ .\\
Case 1.2: $|\xi_1| \sim T \gtrsim |\xi_3|$. 
An elementary calculation shows that
\begin{align*}
	L.H.S. \, of \,  (\ref{55'})
	\lesssim \sup_{T \in{\mathbb N}} \| \chi_{|\xi|=T+O(1)} \ast \langle \xi \rangle^{-2(s+\frac{1}{4})}\|^{\frac{1}{2}}_{L^{\infty}(\mathbb{R}^2)} \prod_{i=1}^3 \|f_i\|_{L^2_x} \lesssim \prod_{i=1}^3 \|f_i\|_{L^2_x} \, ,
\end{align*}
using that $2(s+\frac{1}{4}) > 2$ ,
so that the desired estimate follows.
\end{proof}

\section{Removal of the assumption $A^{cf}(0)=0$}
Applying an idea of Keel and Tao \cite{T1} we use the gauge invariance of the Yang-Mills-Dirac system to show that the condition $A^{cf}(0)=0$, which had to be assumed in Prop. \ref{Prop}, can be removed. 
\begin{lemma}
	\label{Lemma}
	 Let $s>\frac{3}{4}$ and $0 < \epsilon \ll 1$. Assume 
	$(A,\psi)\in( C^0([0,1],H^s) \cap C^1([0,1],H^{s-1}) \times (C^0([0,1],H^{l})$ , $A_0 = 0$ and
	\begin{equation}
		\label{***}
		\|A^{df}(0)\|_{H^s} + \|(\partial_t A)^{df}(0)\|_{H^{s-1}} + \|A^{cf}(0)\|_{H^s} + \|\psi(0)\|_{H^l}  \le \epsilon \, .
	\end{equation}
	Then there exists a gauge transformation $T$ preserving the temporal gauge such that $(TA)^{cf}(0) = 0$ and
	\begin{align}
		\label{T1}
		\|(TA)^{df}(0)\|_{H^s} + \|(\partial_t TA)^{df}(0)\|_{H^{s-1}} + \|(T \psi)(0)\|_{H^l} \lesssim \epsilon \, .
	\end{align}
	T preserves also the regularity, i.e. $TA\in C^0([0,1],H^s) \cap C^1([0,1],H^{s-1})$ , $T\psi \in C^0([0,1],H^l)$. If
	$A \in X^{s,\frac{3}{4}+}_+[0,1] + X^{s,\frac{3}{4}+}_-[0,1] + X^{s+\frac{1}{4},\frac{1}{2}+}_{\tau=0}[0,1]$,  $\partial_t A^{cf} \in C^0([0,1],\\H^{s-1})$ and $\psi\in X^{l,\half+}_+[0,1] + X^{l,\half+}_-[0,1]$ , then $TA$ , $T\psi$ belong to the same spaces. Its inverse $T^{-1}$ has the same properties.
\end{lemma}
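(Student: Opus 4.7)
The plan is to construct a time-independent gauge transformation $U = U(x)$. The temporal gauge preservation $(TA)_0 = 0$ with $A_0 = 0$ forces $(\partial_t U) U^{-1} = 0$, hence $\partial_t U = 0$. The vanishing of the curl-free part at $t = 0$, namely $(TA)^{cf}(0) = 0$, is the divergence condition $\partial^j (TA)_j(0,x) = 0$, which rearranges to the elliptic equation
\begin{equation*}
\partial^j\bigl[(\partial_j U) U^{-1}\bigr] = \partial^j\bigl[U A_j(0) U^{-1}\bigr]
\end{equation*}
for $U:\mathbb{R}^3 \to \mathcal{G}$. Writing $U = \exp(W)$ with $W:\mathbb{R}^3 \to g$ small and expanding, this takes the schematic form
\begin{equation*}
\Delta W = \partial^j A_j(0,x) + \mathcal{N}(W, \nabla W, A(0)),
\end{equation*}
where $\mathcal{N}$ is at least quadratic in $(W, \nabla W)$ or a product involving $A(0)$.

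Next, I would solve this equation by a contraction mapping argument in $H^{s+1}(\mathbb{R}^3)$. The source $\partial^j A_j(0) = \partial^j A^{cf}_j(0)$ has $H^{s-1}$-norm $\lesssim \epsilon$ by (\ref{***}), so $\Delta^{-1}$ applied to it lies in $H^{s+1}$ with norm $\lesssim \epsilon$. The assumption $s > \frac{3}{4}$ gives $s + 1 > \frac{3}{2}$, so $H^{s+1}$ is a Banach algebra continuously embedded in $L^\infty$, and every term in $\mathcal{N}$ is controlled by the Sobolev multiplication law of Proposition \ref{SML}. Contraction on the ball $\{\|W\|_{H^{s+1}} \le C\epsilon\}$ yields a unique small solution with $\|W\|_{H^{s+1}} \lesssim \epsilon$; consequently, both $U - I$ and $U^{-1} - I = \exp(-W) - I$ lie in $H^{s+1}$ with the same bound. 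The initial-data estimate (\ref{T1}) follows by applying the Hodge projection $P$ to $(TA)(0) = U A(0) U^{-1} - (\nabla U) U^{-1}$ and using Sobolev multiplication, together with $\partial_t (TA)(0) = U (\partial_t A)(0) U^{-1}$ and $T\psi(0) = U\psi(0)$ (where the spinor factor uses $l \le s$).

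The main subtle point, which I expect to be the principal obstacle, is the preservation of the Bourgain-Klainerman-Machedon norms. Since $U - I$ is time-independent, multiplication by $U - I$ acts on the Fourier side as convolution in $\xi$ alone, $\widehat{(U-I)u}(\tau,\xi) = \int \widehat{(U-I)}(\xi - \eta)\,\widehat{u}(\tau,\eta)\,d\eta$, and commutes with $\partial_t$. The weight $\langle\tau\rangle^b$ is unchanged, while for the half-wave weight $\langle \tau \mp |\xi|\rangle^b$ one uses the elementary bound $\langle \tau \mp |\xi|\rangle \le \langle \tau \mp |\eta|\rangle + |\xi - \eta|$ to redistribute the weight onto the two factors. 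This reduces matters to a spatial product estimate in $\xi$, uniform in $\tau$, which follows from Proposition \ref{Prop.1.2'} (or, for the $\tau = 0$ spaces, from Proposition \ref{SML}), since $U - I \in H^{s+1}$ is one spatial derivative more regular than the target. Applying the same argument with $W$ replaced by $-W$ yields $T^{-1}$ with the analogous properties, completing the proof.
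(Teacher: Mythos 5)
Your proposal takes a genuinely different route from the paper's. The paper constructs $T$ by the Keel--Tao iteration: set $\nabla V_1 = A^{cf}(0)$, $U_1 = \exp V_1$, apply the gauge transform, then iteratively correct the remaining curl-free part with $\nabla V_k := (T_{k-1}A)^{cf}(0)$, $U_k := \prod_{l=k}^1 \exp V_l$, and show $\|\nabla V_k\|_{H^s}\lesssim \epsilon^{(k+1)/2}$ so the product converges. You instead write a single $U=\exp(W)$, extract the nonlinear elliptic equation $\partial^j[(\partial_j U)U^{-1}]=\partial^j[UA_j(0)U^{-1}]$, and solve it by one contraction. Both produce a gauge transform with $(TA)^{cf}(0)=0$; your formulation is cleaner conceptually (a single elliptic problem rather than a composed sequence of exponentials), while the paper's iteration has the advantage that at each step one only ever tracks $\nabla V_k$, which sidesteps a low-frequency technicality (see below). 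Your treatment of the Bourgain-space preservation --- exploiting time-independence of $U$ and redistributing the weight via $\langle\tau\mp|\xi|\rangle\lesssim\langle\tau\mp|\eta|\rangle+\langle\xi-\eta\rangle$ --- is a valid alternative to the paper's product estimate $\|uv\|_{X^{l,\frac12+}_\pm}\lesssim\|\nabla u\|_{X^{s,1}_\pm}\|v\|_{X^{l,\frac12+}_\pm}$.

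There is one concrete flaw to fix. You claim that $\Delta^{-1}$ carries the source $\partial^j A_j(0)\in H^{s-1}$ into $H^{s+1}$, and you run the contraction in $H^{s+1}$. But $\Delta^{-1}\partial^j = |\nabla|^{-2}\partial^j$ has symbol of size $|\xi|^{-1}$; applied to a general $H^{s-1}$ function it need not land in $L^2$, so $W\notin H^{s+1}$ in general and the ball $\{\|W\|_{H^{s+1}}\le C\epsilon\}$ is not the right ambient space. What one does control is $\nabla W$: the leading term has $\nabla\Delta^{-1}\,div\,A(0)=A^{cf}(0)\in H^s$ with norm $\le\epsilon$. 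You should therefore work in the space with norm $\|W\|_X := \|\nabla W\|_{H^s}$, exactly as the paper does with $\|V\|_X$. Since $\nabla W\in H^s$ with $s>\frac34$ gives $W\in L^\infty$ (via $\dot W^{1,p}\hookrightarrow L^\infty$ in $\mathbb R^3$) and products still close under this norm, the contraction, the algebra bounds, the estimates for $(TA)(0)$, $(\partial_tTA)(0)$, $(T\psi)(0)$, and the Bourgain-space preservation all go through unchanged; only the choice of ambient norm needs to be corrected.
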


\begin{proof}[Proof of Lemma \ref{Lemma}]
	For details of the proof we refer to a similar result for the Yang-Mills and Yang-Mills-Higgs equation in in \cite{P}, Lemma 4.1 (cf. also the sketch of proof in \cite{T1}).
	It is achieved by an iteration argument. We use the Hodge decomposition of $A$:
	$$A=A^{cf}+A^{df} = -|\nabla|^{-2} \nabla \,div \,A + A^{df}\, . $$
	We define $V_1 := -|\nabla|^{-2} \,div \,A(0)$ , so that $\nabla V_1 = A^{cf}(0)$. Thus $$ \|V_1\|_X := \| \nabla V_1\|_{H^s} = \|A^{cf}(0)\|_{H^s} \le \epsilon \, . $$ We define $U_1 := \exp(V_1)$ and consider the gauge transformation $T_1$ with
	\begin{align*}
		A_0 & \longmapsto U_1 A_0 U_1^{-1} - (\partial_t U_1) U_1^{-1} \\
		A & \longmapsto U_1 A U_1^{-1} - (\nabla U_1) U_1^{-1} \\
		\psi_{\pm} & \longmapsto U_1 \psi_{\pm} \, .
	\end{align*}
	Then $T_1$ preserves the temporal gauge, because $U_1$ is independent of $t$ .
	We obtain by Sobolev
	$$\|(T_1 \psi_{\pm})(0)\|_{H^l} \lesssim \|\exp V_1\|_X \|\psi_{\pm}(0)\|_{H^l} \lesssim (1+\epsilon) \|\psi_{\pm}(0)\|_{H^l} \, . $$
	Iteratively we define for $k \ge 2$ :
	$\nabla V_k := (T_{k-1}A)^{cf}(0)$ and $U_k := \prod_{l=k}^1 \exp V_l ,$ so that as in \cite{P}, Lemma 4.1 we obtain 
	$$\|V_k\|_X = \|(T_{k-1} A)^{cf}(0)\|_{H^s} \lesssim \epsilon^{\frac{k+1}{2}} \quad \forall k \ge 2 \, $$
	and $\|U_k-I\|_X \lesssim \epsilon$ (thus $\|U_k\|_X \lesssim 1$) .
	Let the gauge transformation $T_k$ be defined by
	\begin{align*}
		A_0 & \longmapsto U_k A_0 U_k^{-1} - (\partial_t U_k) U_k^{-1} \\
		A & \longmapsto U_k A U_k^{-1} - (\nabla U_k) U_k^{-1} \\
		\psi_{\pm} & \longmapsto U_k \psi_{\pm}  \, .
	\end{align*}
	This implies
	$$ \|(T_k \psi_{\pm})(0) \|_{H^l} \lesssim \|\exp V_k\|_X \|\psi_{\pm}(0)\|_{H^l} \lesssim (1+\epsilon) \|\psi_{\pm}(0)\|_{H^l} $$
	independently of $k$ .
	As in \cite{P}, Lemma 4.1 this allows to define a gauge transformation $T$ by
	$TA := \lim_{k \to \infty} T_k A$ in $C^0([0,1];H^s)$ , $\partial_t TA := \lim_{k \to \infty} \partial_t T_k A$ in $C^0([0,1];H^{s-1})$ and $T \psi_{\pm} := \lim_{k \to \infty} T_k \psi_{\pm}$ in $C^0([0,1];H^l)$ , which fulfills
	$(TA)^{cf}(0)=0$ .
	 We also deduce
	$$ \|(TA)^{df}(0)\|_{H^s} + \|(\partial_t TA)^{df}(0)\|_{H^{s-1}} + \|(T\psi_{\pm})(0)\|_{H^l}  \lesssim \epsilon \,  $$
 and  $$TA=U A U^{-1} -\nabla U U^{-1} \, ,  \, T\psi_{\pm} = U\psi_{\pm} \, ,$$
 where $U = \prod_{l=\infty}^1 \exp V_l$ , $U^{-1} = \prod_{l=1}^{\infty} \exp(-V_l)$ and the limits are taken with respect to $\|\cdot\|_X$ . It has the property $\|U\|_X = \|\nabla U\|_{H^s} \lesssim 1$ . 
 
 We want to show that $T$ preserves the regularity. That $TA$ has the same regularity was shown in \cite{P}, Lemma 4.1.  Let now $\chi=\chi(t)$ be a smooth function with $\chi(t)=1$ for $0 \le t \le 1$ and $\chi(t)=0$ for $t\ge 2$. We obtain :
	\begin{align*}
	\|U \psi_{\pm}\|_{X^{l,\half+}_{\pm}[0,1]} \lesssim 	\|U \psi_{\pm} \chi\|_{X^{l,\half+}_{\pm}} & \lesssim \|\nabla U \chi\|_{X^{s,1}_{\pm}} \|\psi_{\pm}\|_{X^{l,\half+}_{\pm}} \lesssim \|\nabla U\|_{H^s} \|\psi_{\pm}\|_{X^{l,\half+}_{\pm}} \\
		&\lesssim  \|\psi_{\pm}\|_{X^{l,\half+}_{\pm}} < \infty\, 
	\end{align*}
Here we applied the estimate 
$$\|uv\|_{X^{l,\half+\epsilon}_{\pm}} \lesssim \|\nabla u\|_{X^{s,1}_{\pm}} \|v\|_{X^{l,\half+\epsilon}_{\pm}} \,,$$ 
provided $ s > \half$ ,
for the second step, which is proved as \cite{P}, Lemma 4.2. Thus the regularity of $\psi_{\pm}$ is also preserved.

The inverse $T^{-1}$ , defined by
 $$T^{-1}B=U^{-1} B U + U^{-1}\nabla U \, ,  \, T^{-1}\psi_{\pm} = U^{-1}\psi_{\pm} \,, $$
has the same properties as $T$ .
\end{proof}

\section{Proof of Theorem \ref{Theorem1.1}}
\begin{proof}
	It suffices to construct a unique local solution of (\ref{6}),(\ref{7}),(\ref{8}) with initial conditions
	$$ A^{df}(0) = a^{df}  \, , \, (\partial_t A^{df})(0) = {a'}^{df}  \, , \,
	A^{cf}(0) = a^{cf} \, , \,\psi(0)=\psi_0 \, , $$
	which fulfill
	$$ \|A^{df}(0)\|_{H^s} + \|(\partial_t A)^{df}(0)\|_{H^{s-1}} + \|A^{cf}(0)\|_{H^s} + \|\psi(0)\|_{H^l} \le \epsilon $$
	for a sufficiently small $\epsilon > 0$.
	By Lemma \ref{Lemma} there exists a gauge transformation $T$ which fulfills the smallness condition (\ref{T1}) and $(TA)^{cf}(0) =0$. We use Prop. \ref{Prop} to construct a unique solution $(\tilde{A},\tilde{\phi})$ of (\ref{6}),(\ref{7}),(\ref{8}) , where $\tilde{A}=\tilde{A}_+^{df} + \tilde{A}_-^{df} +\tilde{A}^{cf}$ and $\tilde{\psi} = \tilde{\psi}_+ + \tilde{\psi}_-$ ,  with data
	$$\tilde{A}^{df}(0)= (TA)^{df}(0) \, , \, (\partial_t \tilde{A})^{df}(0) = (\partial_t (TA)^{df})(0) \, , \, \tilde{A}^{cf}(0)= (TA)^{cf}(0)=0 \, ,$$ 
	$$ \, \tilde{\psi}(0) = (T\psi)(0) \, ,  $$
	with the regularity
	$$ \tilde{A}^{df}_{\pm} \in X^{s,\frac{3}{4}+}_{\pm}[0,1]  ,  \tilde{A}^{cf} \in X^{s+\frac{1}{4},\frac{1}{2}+}_{\tau=0}[0,1]  ,  \partial_t \tilde{A}^{cf} \in C^0([0,1],H^{s-1})  , 
	\tilde{\psi}_{\pm} \in  X^{l,\half+}_{\pm}[0,1] \, . $$
	This solution satisfies also $\tilde{A} \in C^0([0,1],H^s) \cap C^1[0,1],H^{s-1})$, $\tilde{\psi}\in C^0([0,1],H^l)$ .
	
	Applying the inverse gauge transformation $T^{-1}$ according to Lemma \ref{Lemma} we obtain a unique solution of (\ref{6}),(\ref{7}),(\ref{8}) with the required initial data and also the same regularity.

\end{proof}

\end{document}